\pgfplotsset{
compat=newest, 
cycle list name=exotic %colorscheme
}
\newtheorem{theorem}{Theorem}[section]
\newtheorem{lemma}[theorem]{Lemma}
\newtheorem{corollary}[theorem]{Corollary}
\newtheorem{proposition}[theorem]{Proposition}
\newcommand{\C}{\mathbb{C}}
\newcommand{\R}{\mathbb{R}}
\newcommand{\bfone}{\mathbf{1}}
\DeclareMathOperator{\capa}{cap}
\DeclareMathOperator{\argmin}{argmin}
\DeclareMathOperator{\tr}{tr}
\DeclareMathOperator{\CH}{CH}
\DeclareMathOperator{\Diag}{Diag}
\DeclarePairedDelimiter{\norm}{\lVert}{\rVert}
\begin{document}
\title{Information Geometry of Operator Scaling}
\author{Takeru~Matsuda\thanks{Statistical Mathematics Unit, RIKEN Center for Brain Science, e-mail: 
\texttt{takeru.matsuda@riken.jp}}
~and~Tasuku~Soma\thanks{Graduate School of Information Science and Technology, the University of Tokyo, e-mail: \texttt{tasuku\_soma@mist.i.u-tokyo.ac.jp}}
}
\date{}

\maketitle

\begin{abstract}
Matrix scaling is a classical problem with a wide range of applications.
It is known that the Sinkhorn algorithm for matrix scaling is interpreted as alternating e-projections from the viewpoint of classical information geometry.
Recently, a generalization of matrix scaling to completely positive maps called operator scaling has been found to appear in various fields of mathematics and computer science, and the Sinkhorn algorithm has been extended to operator scaling.
In this study, the operator Sinkhorn algorithm is studied from the viewpoint of quantum information geometry through the Choi representation of completely positive maps.
The operator Sinkhorn algorithm is shown to coincide with alternating e-projections with respect to the symmetric logarithmic derivative metric, which is a Riemannian metric on the space of quantum states relevant to quantum estimation theory. 
Other types of alternating e-projections algorithms are also provided by using different information geometric structures on the positive definite cone.
\end{abstract}

\section{Introduction}
Given a nonnegative matrix $A \in \R_+^{m \times n}$, the \emph{matrix scaling} problem is to find nonnegative diagonal matrices $L \in \R_+^{m \times m}$ and $R \in \R_+^{n\times n}$ such that %$LAR$ is doubly stochastic:
\[
    % \sum_{j=1}^n (LAR)_{ij} = 1 \quad (i=1,\dots,n) \quad \text{and} \quad \sum_{i=1}^n (LAR)_{ij} = 1 \quad (j=1,\dots,n).
    (LAR) \bfone_n = \frac{1}{m}\bfone_m \quad \text{and} \quad (LAR)^{\top} \bfone_m = \frac{1}{n}\bfone_n,
\]
where $\bfone_n=(1,\dots,1)^\top$ is the $n$-dimensional all-one vector.
This problem arises in various applications such as Markov chain estimation~\cite{Sinkhorn1964}, data ranking~\cite{Langville2012}, data assimilation~\cite{Reich2019}, and optimal transport~\cite{Peyre2019}.
See \cite{Idel2016} for many other applications.
%This classical problem of linear algebra is related to diverse areas in mathematics, statistics, and optimization~\cite{Idel2016}.
%More recently, optimal transport with entropy regularization has been found to be equivalent to matrix scaling and this connection has been utilized for applications in machine learning and image processing~\cite{Peyre2019}.
%Sinkhorn~\cite{Sinkhorn1964} proved that there always exists a solution $(L,R)$.
Sinkhorn~\cite{Sinkhorn1964} proposed an alternating algorithm for matrix scaling. 
Starting from $A^{(0)}=A$, the Sinkhorn algorithm\footnote{The Sinkhorn algorithm is also known as RAS method or iterative proportional fitting procedure (IPFP) \cite{Idel2016}.} iterates {row normalization}
\begin{align*}
    A^{(2k+1)} &= \frac{1}{m}\Diag(A^{(2k)}\bfone_n)^{-1}A^{(2k)}, 
\end{align*}
and {column normalization}
\begin{align*}
    A^{(2k+2)} &= \frac{1}{n} A^{(2k+1)}\Diag((A^{(2k+1})^{\top}\bfone_m)^{-1}.
\end{align*}
Note that $A^{(2k+1)} \bfone_n = m^{-1} \bfone_m$ and $(A^{(2k+2)})^{\top} \bfone = n^{-1}\bfone_n$.
%This algorithm is widely used due to its simplicity.
%For example, \cite{Peyre2019} applied it to efficiently compute the optimal transport with entropy regularization, which is useful in machine learning and image processing.
There have been many approaches to prove the convergence of the Sinkhorn algorithm \cite{Idel2016}, such as potential optimization, convex duality, nonlinear Perron--Frobenius theory, and entropy optimization.
%Among them, Linial \cite{Linial1998} introduced a quantity called \emph{capacity}.

Csisz{\'a}r~\cite{Csiszar1975} analyzed the Sinkhorn algorithm by focusing on the geometry of probability distributions.
Specifically, let
\begin{align*}
	\Pi_1 = \left\{ A \in \R_{++}^{m \times n} \mid A \bfone_n = m^{-1}\bfone_m \right\}, \\
	\Pi_2 = \left\{ A \in \R_{++}^{m \times n} \mid A^{\top} \bfone_m = n^{-1}\bfone_n \right\}
\end{align*}
be the spaces of row-normalized and column-normalized positive matrices, respectively.
Then, Csisz{\'a}r~\cite{Csiszar1975} proved that each iteration of the Sinkhorn algorithm coincides with the projection with respect to the Kullback--Leibler divergence $D_{\mathrm{KL}}$:
\begin{align*}
		D_{\mathrm{KL}}(A^{(2k+1)} \mid\mid A^{(2k)}) &= \min_{B \in \Pi_1} D_{\mathrm{KL}}(B \mid\mid A^{(2k)}), \\
		D_{\mathrm{KL}}(A^{(2k+2)} \mid\mid A^{(2k+1)}) &= \min_{B \in \Pi_2} D_{\mathrm{KL}}(B \mid\mid A^{(2k+1)}).
\end{align*}
This is one of the early results of \emph{information geometry} \cite{AmariNagaoka2007,Amari2016}, an interdisciplinary field that provides geometric insights into statistics, information theory, and optimization.
In information geometry, the space of probability distributions is viewed as a Riemannian manifold with the Riemmanian metric given by the Fisher information matrix. 
Then, a pair of dual affine connections called e-connection and m-connection is introduced to this manifold.
In this context, the Sinkhorn algorithm is interpreted as alternating \emph{e-projections} onto $\Pi_1$ and $\Pi_2$.
Further details will be presented in Section~\ref{sec:matscaling}.
%From the perspective of information geometry, the Sinkhorn algorithm is alternating \emph{Bregman projection} (also known as \emph{I-projection}) with respect to negative entropy~\cite{Csiszar1975}.
%This information geometric viewpoint of matrix scaling is crucial to several important applications such as optimal transport~\cite{Peyre2019} and 
%\cite{Csiszar1975} showed that the Sinkhorn algorithm is nothing but the alternating e-projection (or I-projection) and minimizes Kullback--Leibler divergence.

Recently, a quantum (non-commutative) generalization of {matrix scaling} called \emph{operator scaling} has been attracting much interests~\cite{Gurvits2004,Garg2018,Garg2019,Georgiou2015}.
Let $\Phi: \C^{n\times n} \to \C^{m\times m}$ be a completely positive linear map given by
\[ 
    \Phi(X) = \sum_{i=1}^k A_i X A_i^\dagger,
\] 
where $A_1,\dots,A_k \in \C^{m \times n}$ and $\dagger$ denotes the Hermitian conjugate.
Its {dual} map $\Phi^*: \C^{m\times m} \to \C^{n\times n}$ is defined as
\[
    \Phi^*(X) = \sum_{i=1}^k A_i^\dagger X A_i.
\]
The {operator scaling} problem is to find non-singular Hermitian matrices $L \in \C^{m \times m}$ and $R \in\C^{n\times n}$ such that
\[
    \Phi_{L,R}(I_n) = \frac{1}{m} I_m \quad \text{and} \quad \Phi_{L,R}^*(I_m) = \frac{1}{n} I_n,
\]
where $I_n$ is the $n$-dimensional identity matrix and
\[
    \Phi_{L,R}(X) = L \Phi(R^{\dagger} X R) L^{\dagger}
\]
is the scaled operator.
The operator scaling problem was originally introduced by Gurvits~\cite{Gurvits2004} to study a major open problem in computational complexity theory called the {Edmonds problem}, which is to determine whether a given matrix subspace contains a non-singular matrix or not. 
%While there is a simple randomized algorithm via the Schwartz--Zippel lemma when the matrix subspace is on the real field (or, more generally, any sufficiently large field), it is a major open problem in the computational complexity theory whether a deterministic polynomial-time algorithm exists for the Edmonds problem.
Later, operator scaling has been found to appear in surprisingly many fields of mathematics and computer science, such as matrix rank in non-commutative variables~\cite{Garg2019,Ivanyos2017,Ivanyos2018}, Brascamp--Lieb inequalities~\cite{Garg2018}, quantum Schr\"odinger bridge~\cite{Georgiou2015}, multivariate scatter estimation~\cite{Franks2020}, and computational invariant theory~\cite{Allen-Zhu2018}.

Gurvits~\cite{Gurvits2004} extended the {Sinkhorn algorithm} to operator scaling.
Specifically, starting from $\Phi^{(0)} = \Phi$, the operator Sinkhorn algorithm iterates {left normalization}
\begin{align*}
    \Phi^{(2k+1)} &= \Phi_{L,I_n}^{(2k)} \quad \text{where $L=\frac{1}{\sqrt m}\Phi^{(2k)}(I_n)^{-1/2}$,}
\end{align*}
and {right normalization}
\begin{align*}
    \Phi^{(2k+2)} &= \Phi_{I_m,R}^{(2k+1)} \quad \text{where $R=\frac{1}{\sqrt n}(\Phi^{(2k+1)})^*(I_m)^{-1/2}$.}
\end{align*}
% \begin{alignat*}{3}
%     \text{left normalization}: \Phi^{(2k+1)} &= \Phi_{L,I}^{(2k)}   &\quad &\text{where $L=\Phi^{(2k)}(I)^{-1/2}$,}  \\
%     \text{right normalization}: \Phi^{(2k+2)} &= \Phi_{I,R}^{(2k+1)} &\quad &\text{where $R=(\Phi^{(2k+1)})^*(I)^{-1/2}$.}
% \end{alignat*}
Note that $\Phi^{(2k+1)}(I_n)=m^{-1}I_m$ and $(\Phi^{(2k+2)})^*(I_m)=n^{-1}I_n$.
Gurvits~\cite{Gurvits2004} studied the convergence of the operator Sinkhorn algorithm for the case of $m=n$ by extending the {capacity}-based analysis of matrix scaling by \cite{Linial1998}.
Later, full convergence analysis of the operator Sinkhorn algorithm was given by \cite{Garg2019}. 
% The general case was analyzed recently by \cite{Garg2018,Franks2018}.
%Also, a nonlinear Perron-Frobenius analysis of matrix scaling was successfully generalized to operator scaling~\cite{Georgiou2015}.
However, it is still unclear whether the operator Sinkhorn algorithm can be viewed as alternating projections with respect to some divergence measure as in matrix scaling. 
In fact, this question was already posed in the original paper of Gurvits~\cite{Gurvits2004} and even mentioned as ``a major open question'' of operator scaling in the survey of Idel~\cite{Idel2016}.

\subsection{Our contribution}
In this study, we investigate the operator Sinkhorn algorithm from the viewpoint of \emph{quantum information geometry}~\cite{AmariNagaoka2007} by using the Choi representation \cite{Choi1975} of completely positive maps.
Quantum information geometry was originally introduced to study the geometry of the space of quantum states, and it involves many non-trivial problems compared to classical information geometry such as the non-uniqueness of monotone Riemannian metrics and torsion of e-connections.
Among possible Riemannian metrics, the \emph{symmetric logarithmic derivative (SLD) metric} has been found to be relevant to quantum estimation theory such as the quantum Cram\'{e}r--Rao inequality \cite{Helstrom1967}. 
Our main finding is that the operator Sinkhorn algorithm is interpreted as alternating e-projections with respect to the SLD metric. 
%Furthermore, we also prove that the trajectory of alternating projection is unique.
%For the commutative case (i.e., matrix scaling), our result recovers the classic result of \cite{Csiszar1975} that the Sinkhorn algorithm coincides with alternating Bregman projection with respect to negative entropy.
Thus, our result is viewed as a generalization of the result by Csisz{\'a}r~\cite{Csiszar1975} to operator scaling. 
Furthermore, we also show that this geometric interpretation of the operator Sinkhorn algorithm even holds for a more general setting, namely, \emph{operator scaling with general marginals}~\cite{Georgiou2015,Franks2018}.

It is still open whether the operator Sinkhorn algorithm can be written as alternating minimization of some divergence.
We discuss alternating minimization algorithms using several divergences known in quantum information theory from the geometric viewpoint and report that these do not coincide with the trajectory of the operator Sinkhorn algorithm.

We believe that our new geometric insight shed new light on operator scaling and may lead to deeper understanding and effective algorithms.

%\subsection{Related work}
% Operator scaling was introduced by Gurvits~\cite{Gurvits2004} to study the \emph{Edmonds problem}.
% The Edmonds problem is to determine whether a given matrix subspace contains a nonsingular matrix. 
% If the matrix subspace is on the real field (or, more generally, any sufficiently large field), there exists a simple randomized algorithm via the Schwartz-Zippel lemma. 
% A deterministic polynomial-time algorithm for the Edmonds problem is a major open problem in the computational complexity theory.
%Beyond the original motivation, operator scaling has been applied to several problems~\cite{Garg2018,Allen-Zhu2018}. See the survey of Idel~\cite{Idel2016}.

% \emph{Information geometry} is a research field that provides differential geometric understanding of information science including statistics and optimization.
% The space of probability distributions is endowed with the Fisher metric and dual connection called e-connection and m-connection.
% \cite{Csiszar1975} showed that the Sinkhorn algorithm for matrix scaling is viewed as an alternating e-projection in the space of multinomial distributions with respect to the Fisher metric.

\subsection{Organization}
This paper is organized as follows.
%In Section~\ref{sec:pre}, we briefly introduce necessary backgrounds of information geometry and matrix analysis. 
In Section~\ref{sec:matscaling}, we briefly review classical information geometry and explain the geometric interpretation of the matrix Sinkhorn algorithm.
In Section~\ref{sec:opscaling}, we briefly introduce quantum information geometry and present our main finding on the operator Sinkhorn algorithm.
Section~\ref{sec:general-marginals} extends the main result in the previous section to the general marginals setting.
In Section~\ref{sec:discussion}, we discuss other alternating e-projection algorithms using different Riemannian metrics.
In Section~\ref{sec:others}, we touch on another geometric approach to operator scaling in previous work and discuss divergence characterization of the operator Sinkhorn algorithm. 
Finally, we conclude the paper in Section~\ref{sec:conclusion}.

\section{Information geometry of matrix scaling}\label{sec:matscaling}
In this section, we review the result that the Sinkhorn algorithm for matrix scaling is interpreted as alternating e-projections from the viewpoint of classical information geometry, which was originally shown by Csisz{\'a}r~\cite{Csiszar1975}.

\subsection{Classical information geometry}
Classical information geometry provides geometric insights into statistics, information theory, and optimization by regarding the space of probability distributions as a Riemannian manifold with dual affine connections. 
Here, we briefly introduce concepts of classical information geometry.
See \cite{AmariNagaoka2007,Amari2016} for more details.

Let 
\[
    S_{n-1}=\left\{ p=(p_1,\dots,p_n) \mid p_k > 0, \ \sum_{k=1}^n p_k = 1 \right\} \subset \mathbb{R}^n
\]
be the $(n-1)$-dimensional probability simplex.
Each point of $S_{n-1}$ corresponds to a multinomial distribution on $\{ 1,\dots,n \}$. 
We explain the information geometric structure of $S_{n-1}$ following \cite{Fujiwara2015}.

First, we define a Riemannian metric on $S_{n-1}$.
From statistical perspective, the Riemannian metric $g$ on $S_{n-1}$ should be monotone under Markov embeddings.
Chentsov's theorem states that such a Riemannian metric $g$ is uniquely given by the Fisher information matrix (up to constant).
Thus, we adopt the Fisher information matrix as the Riemannian metric tensor on $S_{n-1}$ and call it the \emph{Fisher metric}.
To do calculation with the Fisher metric, the \emph{e-representation} and \emph{m-representation} of tangent vectors are useful.
Recall that each tangent vector of a manifold is identified with a directional derivative operator.
By using this correspondence, the e-representation $X^{(e)} \in \mathbb{R}^n$ and m-representation $X^{(m)} \in \mathbb{R}^n$ of a tangent vector $X$ at $p \in S_{n-1}$ are defined as
\begin{align*}
    X^{(e)} &= (X (\log p_1),\dots,X (\log p_n)), \\
    X^{(m)} &= (X p_1,\dots,X p_n),
\end{align*}
respectively.
Then, the inner product of two tangent vectors $X$ and $Y$ at $p \in S_{n-1}$ with respect to the Fisher metric is calculated as the product-sum of their e-representation and m-representation:
\begin{align}
    g(X, Y) = \sum_{k=1}^n X^{(e)}_k Y^{(m)}_k = \sum_{k=1}^n \frac{1}{p_k} X^{(m)}_k Y^{(m)}_k. \label{Fisher_em}
%    g(X, Y) = \inprod*{X^{(m)}, Y^{(e)}} = \sum_{k=1}^n (X p)_k (Y \log p)_k.
\end{align}

% Let $\psi(A) = \sum_{i,j} (a_{ij}\log a_{ij} - a_{ij})$ be the negative entropy function.\footnote{We add the linear term $\sum_{i,j}a_{ij}$ to avoid a constant.}
% One can introduce a dually flat structure $(g, \nabla, \nabla^*)$ by the standard construction in information geometry as follows.
% Let us take $\eta_{ij} = a_{ij}$ ($i = 1,\dots, m$, $j = 1,\dots, n$) be the standard affine coordinate system of $\caP$.
% The dual coordinate system $[\theta^{ij}]$ is given by
% \[
%     \theta^{ij} = \frac{\partial \psi}{\partial \eta_{ij}} = \log(\eta_{ij}).
% \]
% The metric tensor $g$ is given by
% \begin{align*}
%         g(X, Y) = \inprod*{X^{(m)}, Y^{(e)}} = \sum_{i,j} x_{ij}y^{ij}
% \end{align*}
% for all vector fields $X = \sum_{i,j}x_{ij} \partial^{ij}, Y=\sum_{i,j} y^{ij} \partial_{ij}$ on $\caP$.
% Finally, let $\nabla$ and $\nabla^*$ be the flat connections with the affine coordinate systems $[\eta_{ij}]$ and $[\theta^{ij}]$, respectively.
% Adopting the standard terminology of information geometry, we call a $\nabla$-geodesic (resp. $\nabla^*$-geodesic) an m-geodesic (resp. e-geodesic).

Next, we introduce a pair of dual affine connections called the \emph{e-connection} and \emph{m-connection} on $S_{n-1}$.
Note that these connections are different from the Levi--Civita connection, which is the unique torsion-free affine connection that preserves the Riemannian metric.
%Let $\nabla$ be an affine connection.
%In general, an affine connection on a manifold is specified by its connection coefficients $\Gamma_{ij}^k$, which .
%A coordinate system $(\theta^i)$ is called an \emph{affine coordinate system} for $\nabla$ if the Christoffel symbols of $\nabla$ with respect to $(\theta^i)$ vanish: $\Gamma_{ij}^k = 0$ for all $i,j,k$.
%An affine connection $\nabla$ is \emph{flat} if it has an affine coordinate system.
%A tuple $(g, \nabla, \nabla^*)$ is called a \emph{dualistic structure} of $M$.
%We say that $(M, g, \nabla, \nabla^*)$ is a \emph{dually flat space} if $\nabla$ and $\nabla^*$ are both flat.
The e-connection and m-connection on $S_{n-1}$ are defined such that their connection coefficients vanish under the coordinate systems $\theta=(\log p_1-\log p_n,\dots,\log p_{n-1}-\log p_n)$ and $\eta=(p_1,\dots,p_{n-1})$, respectively.
%the m-connection on $S_{n-1}$ coincides with the connection induced from the Euclidean structure.
%In other words, the coordinate system $\eta=(p_1,\dots,p_n)$ is m-affine: 
%On the other hand, the e-affine coordinate system on $S_{n-1}$ is given by $\theta=(\log p_1,\dots,\log p_n)$.
In other words, $\theta$ and $\eta$ are e-affine and m-affine coordinate systems, respectively.
Due to the existence of affine coordinate systems, $S_{n-1}$ is both e-flat and m-flat.
In this sense, $S_{n-1}$ is said to be a \emph{dually flat} space.
Since the m-connection on $S_{n-1}$ coincides with the affine connection induced by the natural embedding of $S_{n-1}$ into $\mathbb{R}^n$, the m-geodesic from $p^{(1)} \in S_{n-1}$ to $p^{(2)} \in S_{n-1}$ is given by $\eta(t)=(1-t)\eta^{(1)}+t\eta^{(2)}$, which means that $p_k(t) = (1-t) p_k^{(1)} + t p_k^{(2)}$ for $k=1,\dots,n$.
Similarly, the e-geodesic from $p^{(1)} \in S_{n-1}$ to $p^{(2)} \in S_{n-1}$ is given by $\theta(t) = (1-t) \theta^{(1)} + t \theta^{(2)}$, which means that $\log p_k(t) = (1-t) \log p_k^{(1)} + t \log p_k^{(2)}+C(t)$ for $k=1,\dots,n$, where $C(t)$ is the normalization constant.
%More generally, a submanifold $M$ of $S_{n-1}$ is e-totally geodesic (resp. m-totally geodesic) if $M$ is expressed as an affine subspace of $S_{n-1}$ in $\theta$ (resp. $\eta$) coordinate system.
%dually flat.
% These affine connections $\nabla$ and $\nabla^*$ are dual:
% \begin{align*}
%     Z g(X, Y) = g(\nabla_Z X, Y) + g(X, \nabla^*_Z, Y)
% \end{align*}
% holds for all vector fields $X$, $Y$, and $Z$.
%A curve $\gamma(t)$ in $M$ is called a \emph{$\nabla$-geodesic} if $\nabla_{\dot\gamma} \dot\gamma = 0$. 
%Especially, a $\nabla$-geodesic from $p$ to $q$ is a $\nabla$-geodesic $\gamma(t)$ with $\gamma(0) = p$ and $\gamma(1)=q$.

Finally, we explain the generalized Pythagorean theorem and e-projection on $S_{n-1}$.
Let
\[
    D_{\mathrm{KL}}(p \mid\mid q) = \sum_{k=1}^n p_k \log \frac{p_k}{q_k}
\]
be the Kullback--Leibler divergence between $p \in S_{n-1}$ and $q \in S_{n-1}$, which satisfies $D_{\mathrm{KL}}(p \mid\mid q) \geq 0$ and $D_{\mathrm{KL}}(p \mid\mid q)=0$ if and only if $p=q$.
The Pythagoream theorem is generalized to the Kullback--Leibler divergence as follows.

\begin{lemma}%[Generalized Pythagoream theorem]
For points $p,q,r \in S_{n-1}$, let $\gamma_1$ be the e-geodesic from $p$ to $q$ and $\gamma_2$ be the m-geodesic from $q$ to $r$.
If $\gamma_1$ and $\gamma_2$ are orthogonal with respect to the Fisher metric at $q$, then $D_{\mathrm{KL}}(r \mid\mid p) = D_{\mathrm{KL}}(r \mid\mid q) + D_{\mathrm{KL}}(q \mid\mid p)$.
\end{lemma}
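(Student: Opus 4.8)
The plan is to reduce the Pythagorean identity to the orthogonality hypothesis by a direct algebraic manipulation of the three divergences, and then to recognize the resulting expression as a Fisher inner product of the two geodesic tangent vectors at $q$. First I would expand the difference $D_{\mathrm{KL}}(r \mid\mid p) - D_{\mathrm{KL}}(r \mid\mid q) - D_{\mathrm{KL}}(q \mid\mid p)$ using the definition of $D_{\mathrm{KL}}$. The entropy terms $r_k \log r_k$ cancel between the first two divergences, and after collecting the remaining terms the difference simplifies to
\[
    \sum_{k=1}^n (r_k - q_k)\log\frac{q_k}{p_k}.
\]
Thus it suffices to show that this sum vanishes under the stated orthogonality.

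Next I would identify this sum with the Fisher inner product at $q$ of the tangent vectors $\dot\gamma_1$ and $\dot\gamma_2$. For the e-geodesic $\gamma_1$ from $p$ to $q$, the relation $\log p_k(t) = (1-t)\log p_k + t\log q_k + C(t)$ gives, at the endpoint $q$, the e-representation $(\dot\gamma_1)^{(e)}_k = \log\frac{q_k}{p_k} + C'(1)$. For the m-geodesic $\gamma_2$ from $q$ to $r$, the relation $p_k(s) = (1-s)q_k + s r_k$ gives, at the starting point $q$, the m-representation $(\dot\gamma_2)^{(m)}_k = r_k - q_k$. By the em-expression \eqref{Fisher_em} of the Fisher metric, the orthogonality $g(\dot\gamma_1,\dot\gamma_2)=0$ reads $\sum_k \bigl(\log\frac{q_k}{p_k}+C'(1)\bigr)(r_k-q_k)=0$, which matches the sum above once the additive constant is disposed of.

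The one step that requires care is the normalization constant $C(t)$ appearing in the e-geodesic, which contributes the spurious term $C'(1)$ to the e-representation. The point is that this constant drops out of the inner product: since all the $p_k(s)$ lie on the simplex $S_{n-1}$, the m-representation satisfies $\sum_k (\dot\gamma_2)^{(m)}_k = \sum_k (r_k - q_k) = 0$, so $C'(1)\sum_k (r_k-q_k) = 0$ and the constant makes no contribution. Hence $\sum_k (r_k-q_k)\log\frac{q_k}{p_k} = g(\dot\gamma_1,\dot\gamma_2) = 0$, and combining this with the first step yields the desired identity $D_{\mathrm{KL}}(r \mid\mid p) = D_{\mathrm{KL}}(r \mid\mid q) + D_{\mathrm{KL}}(q \mid\mid p)$. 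I expect this bookkeeping of the additive constant in the e-representation to be the only genuinely delicate point; the remainder is elementary cancellation.
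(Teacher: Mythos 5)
Your proof is correct. The paper itself states this lemma without proof (it is the classical result of Csisz\'ar and Amari--Nagaoka), so there is no in-paper argument to compare against; but your route is the standard one and fits the paper's own machinery exactly: expand the divergences to get $\sum_k (r_k-q_k)\log(q_k/p_k)$, identify this with the pairing \eqref{Fisher_em} of the e-representation $\log(q_k/p_k)+C'(1)$ of $\dot\gamma_1$ at $q$ with the m-representation $r_k-q_k$ of $\dot\gamma_2$, and use $\sum_k(r_k-q_k)=0$ to kill the normalization constant. This is precisely the e/m-representation bookkeeping the paper uses later in its proof of Proposition~\ref{prop:matrix-Sinkhorn}, where the same $C'(1)$ term is disposed of by the same mechanism, so your handling of the ``delicate point'' is exactly right.
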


By using this, we obtain the following.
%Let $M$ be a submanifold of $S_{n-1}$. 
%Recall that a submanifold $M$ of $S_{n-1}$ is m-totally geodesic if it is expressed as an affine subspace under $\eta$-coordinate.

\begin{lemma}\label{lem:eproj}
Let $M$ be an {m-totally geodesic} connected submanifold of $S_{n-1}$ and $p \in S_{n-1}$.
Then, a point $q \in M$ satisfies $D_{\mathrm{KL}}(q \mid\mid p) = \min_{r \in M} D_{\mathrm{KL}}(r \mid\mid p)$ if and only if the e-geodesic from $p$ to $q$ is orthogonal to $M$ at $q$ with respect to the Fisher metric.
\end{lemma}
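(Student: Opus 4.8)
The plan is to reduce both implications to a single first-order optimality condition: using the em-representation formula~\eqref{Fisher_em}, I would identify the differential of $r \mapsto D_{\mathrm{KL}}(r \mid\mid p)$ along directions tangent to $M$ at $q$ with a Fisher inner product involving the e-geodesic from $p$ to $q$, and then upgrade the resulting stationarity characterization to a genuine minimum by invoking the generalized Pythagorean theorem (the preceding lemma).

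First I would establish the key identity. Fix $q \in M$ and let $X$ be a tangent vector at $q$, viewed as a directional derivative. Differentiating $D_{\mathrm{KL}}(r \mid\mid p) = \sum_k r_k (\log r_k - \log p_k)$ along $X$ at $r = q$ produces $\sum_k (X q_k)(\log q_k - \log p_k) + \sum_k X q_k$. Since every tangent vector to $S_{n-1}$ is trace-free, $\sum_k X^{(m)}_k = \sum_k X q_k = 0$, so the last sum vanishes and
\[
    X\big[\, D_{\mathrm{KL}}(\cdot \mid\mid p) \,\big] = \sum_{k=1}^n X^{(m)}_k (\log q_k - \log p_k).
\]
On the other hand, let $V$ be the tangent vector at $q$ of the e-geodesic from $p$ to $q$. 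Because $\log q_k(t)$ is affine in $t$ along an e-geodesic up to a $k$-independent normalization constant, the e-representation of $V$ has the form $V^{(e)}_k = \log q_k - \log p_k + c$ for some $c$ independent of $k$. Substituting into~\eqref{Fisher_em} and again using $\sum_k X^{(m)}_k = 0$ to annihilate the constant $c$, I obtain $g(V, X) = \sum_k X^{(m)}_k (\log q_k - \log p_k)$. Comparing the two expressions yields the crucial identity
\[
    X\big[\, D_{\mathrm{KL}}(\cdot \mid\mid p) \,\big] = g(V, X) \qquad \text{for all } X \in T_q M.
\]

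With this identity in hand both directions follow quickly. For the ``only if'' direction, if $q$ minimizes $D_{\mathrm{KL}}(\cdot \mid\mid p)$ over $M$, then since $S_{n-1}$ is open and $M$ has no boundary, $q$ is an interior critical point, so $X[D_{\mathrm{KL}}(\cdot \mid\mid p)] = 0$ for every $X \in T_q M$; by the identity this means $g(V, X) = 0$ for all such $X$, i.e.\ the e-geodesic from $p$ to $q$ is orthogonal to $M$ at $q$. For the ``if'' direction, assume this orthogonality and take any $r \in M$. Since $M$ is m-totally geodesic, the m-geodesic $\gamma_2$ from $q$ to $r$ lies entirely in $M$, so its initial tangent belongs to $T_q M$ and is therefore orthogonal to the e-geodesic from $p$ to $q$. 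The generalized Pythagorean theorem then gives
\[
    D_{\mathrm{KL}}(r \mid\mid p) = D_{\mathrm{KL}}(r \mid\mid q) + D_{\mathrm{KL}}(q \mid\mid p) \ge D_{\mathrm{KL}}(q \mid\mid p),
\]
because $D_{\mathrm{KL}}(r \mid\mid q) \ge 0$, so $q$ is indeed a minimizer.

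I expect the main obstacle to be the bookkeeping inside the key identity: one must check carefully that the normalization constant of the e-geodesic and the $+1$ arising from differentiating $r_k \log r_k$ both cancel, and that this cancellation is forced precisely by the trace-free condition $\sum_k X^{(m)}_k = 0$ on tangent vectors to the simplex. This trace-free property is the linchpin that aligns the differential of the divergence with the Fisher inner product, and it is what makes the whole equivalence go through; everything else is a direct application of the two preceding structural facts (the em-representation of the metric and the generalized Pythagorean theorem).
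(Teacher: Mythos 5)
Your proof is correct and matches the route the paper intends: the paper states this lemma without a written proof, deriving it directly from the preceding generalized Pythagorean lemma, and your argument is precisely the standard fill-in --- the Pythagorean theorem for the ``if'' direction, plus the first-order identity $X\bigl[D_{\mathrm{KL}}(\cdot \mid\mid p)\bigr] = g(V,X)$ for the ``only if'' direction. Your key computation, in which the normalization constant of the e-geodesic and the $+1$ from differentiating $r_k \log r_k$ are both annihilated by $\sum_k X^{(m)}_k = 0$, is also exactly the mechanism the authors themselves deploy later in the proof of Proposition~\ref{prop:matrix-Sinkhorn}, so nothing in your approach diverges from the paper.
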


The unique point $q \in M$ in Lemma~\ref{lem:eproj} is called the \emph{e-projection} of $p$ onto $M$.

\subsection{Sinkhorn as alternating e-projections}
Now, we show that the Sinkhorn algorithm coincides with alternating e-projections.
Note that this result was originally derived by Csisz{\'a}r~\cite{Csiszar1975} \footnote{Csisz{\'a}r used the term ``I-projection" instead of e-projection.}.
Here, we present a proof based on e-geodesics for later reference.

Recall that, given a nonnegative matrix $A \in \R_+^{m \times n}$, the matrix scaling problem is to find nonnegative diagonal matrices $L \in \R_+^{m\times m}$ and $R \in \R_+^{n \times n}$ such that $(LAR) \bfone_n = m^{-1} \bfone_m$ and $(LAR)^{\top} \bfone_m = n^{-1} \bfone_n$. Starting from $A^{(0)}=A$, each iteration of the Sinkhorn algorithm is defined as
\begin{align*}
    A^{(2k+1)} &= \frac{1}{m} \Diag(A^{(2k)}\bfone_n)^{-1}A^{(2k)}, \\
    A^{(2k+2)} &= \frac{1}{n} A^{(2k+1)}\Diag((A^{(2k+1})^{\top}\bfone_m)^{-1}.
\end{align*}

Let
\begin{align*}
    \Pi &= \{ A \in \R_{++}^{m \times n} \mid \bfone_m^{\top} A \bfone_n = 1 \}, \\
	\Pi_1 &= \{ A \in \R_{++}^{m \times n} \mid A \bfone_n = m^{-1} \bfone_m \} \subset \Pi, \\ 
	\Pi_2 &= \{ A \in \R_{++}^{m \times n} \mid A^{\top} \bfone_m = n^{-1} \bfone_n \} \subset \Pi.
\end{align*}
We identify $\Pi$ with $S_{mn-1}$ through vectorization and introduce the corresponding information geometric structure.
Since $\Pi_1$ and $\Pi_2$ are affine subspaces of $\Pi$ under the $\eta$-coordinate, they are m-totally geodesic connected submanifolds of $\Pi$.
From the definition of the Sinkhorn algorithm, it is clear that $A^{(2k+1)} \in \Pi_1$ and $A^{(2k+2)} \in \Pi_2$ for every $k$ if $A^{(0)}=A$ is a positive matrix.
Then, the Sinkhorn algorithm is interpreted as alternating e-projections onto $\Pi_1$ and $\Pi_2$ as follows.

\begin{proposition}\label{prop:matrix-Sinkhorn}
Assume that $A^{(0)}=A$ is a positive matrix.
Then, each iteration of the Sinkhorn algorithm coincides with the e-projection onto $\Pi_1$ or $\Pi_2$.
Namely, the e-geodesic from $A^{(2k)}$ to $A^{(2k+1)}$ (resp. from $A^{(2k+1)}$ to $A^{(2k+2)}$) is orthogonal to $\Pi_1$ (resp. $\Pi_2$) with respect to the Fisher metric for every $k$.
\end{proposition}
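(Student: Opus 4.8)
The plan is to invoke Lemma~\ref{lem:eproj}: since $\Pi_1$ and $\Pi_2$ are m-totally geodesic connected submanifolds of $\Pi\cong S_{mn-1}$ (being affine in the $\eta$-coordinate, as noted above), a point $q\in\Pi_1$ is the e-projection of $p$ onto $\Pi_1$ precisely when the e-geodesic from $p$ to $q$ meets $\Pi_1$ orthogonally at $q$ in the Fisher metric. Because the algorithm already guarantees $A^{(2k+1)}\in\Pi_1$ and $A^{(2k+2)}\in\Pi_2$, it remains only to verify this orthogonality; I would treat the row-normalization step in full and then observe that the column step is symmetric.

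First I would write the row step $A^{(2k+1)}=\frac1m\Diag(A^{(2k)}\bfone_n)^{-1}A^{(2k)}$ entrywise, abbreviating $A=A^{(2k)}$ and $A'=A^{(2k+1)}$. Then $\log a'_{ij}-\log a_{ij}=-\log\big(\sum_\ell a_{i\ell}\big)-\log m$ depends only on the row index $i$. Along the e-geodesic from $A$ to $A'$, for which $\log a_{ij}(t)=(1-t)\log a_{ij}+t\log a'_{ij}+C(t)$, the e-representation of the velocity vector $X$ is $\frac{d}{dt}\log a_{ij}(t)=\log a'_{ij}-\log a_{ij}+C'(t)$, which is therefore of the form $f(i)$, a quantity independent of the column index $j$ (after vectorizing the index $(i,j)$ into a single index).

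Next I would identify the tangent space of $\Pi_1$ through the m-representation. Since $\Pi_1$ is cut out by the linear equations $\sum_j a_{ij}=1/m$ for each $i$, differentiating these constraints shows that any tangent vector $Y$ to $\Pi_1$ has m-representation satisfying $\sum_j Y^{(m)}_{ij}=0$ for every $i$. Combining this with the e-representation of $X$ and the Fisher-metric formula~\eqref{Fisher_em}, which pairs e- and m-representations, gives
\[
g(X,Y)=\sum_{i,j}X^{(e)}_{ij}Y^{(m)}_{ij}=\sum_i f(i)\sum_j Y^{(m)}_{ij}=0,
\]
so the e-geodesic is orthogonal to $\Pi_1$ at $A'$. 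The column-normalization step is handled identically with the roles of rows and columns exchanged: there the e-representation of the geodesic velocity depends only on the column index, while tangent vectors to $\Pi_2$ have m-representations with vanishing column sums.

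The computations are short, so there is no serious obstacle; the one point demanding care is keeping the e- and m-representations straight under the identification $\Pi\cong S_{mn-1}$ and correctly reading off the tangent space of $\Pi_1$ (resp. $\Pi_2$) as the vanishing of row (resp. column) sums of the m-representation. Once these bookkeeping identifications are fixed, the orthogonality falls out of the simple fact that the geodesic velocity is constant along each row (resp. column) while the admissible variations sum to zero along each row (resp. column).
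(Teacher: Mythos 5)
Your proposal is correct and follows essentially the same route as the paper's own proof: compute the e-representation of the e-geodesic velocity, observe it is constant along each row (a function of $i$ only), pair it via the Fisher-metric formula~\eqref{Fisher_em} against m-representations of tangent vectors to $\Pi_1$, which have vanishing row sums, and conclude orthogonality, with the column step handled symmetrically. Your entrywise bookkeeping (including the $-\log m$ constant, which the paper silently absorbs) is if anything slightly more careful than the paper's, and your explicit appeal to Lemma~\ref{lem:eproj} matches how the paper derives Corollary~\ref{cor:KL-Sinkhorn}.
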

\begin{proof}
	The e-geodesic from $A^{(2k)}$ to $A^{(2k+1)}$ is given by
	\begin{align*}
		\log A(t) &= (1-t) \log A^{(2k)} + t \log A^{(2k+1)} + C(t) \bfone_m \bfone_n^{\top},
	\end{align*}
	where $\log$ is applied element-wise\footnote{Thus, it is different from the matrix logarithm.} and $C(t)$ is the normalization constant.
% 	Thus,
% 	\[
% 	\frac{{\rm d}}{{\rm d} t} \log \A(t)_{ij} = \log A^{(2k)}_{ij} - \log A^{(2k-1)}_{ij}.
% 	\]
% 	In particular, $C'(t)$ coincides with the Kullback--Leibler divergence at $t=1$:
% 	\[
% 		C'(1)= D(A^{(2k)} || A^{(2k-1)}) = \sum_{i,j} A^{(2k)}_{ij} \log \frac{A^{(2k)}_{ij}}{A^{(2k-1)}_{ij}}.
% 	\]
	Therefore, the e-representation of the tangent vector $X$ of this e-geodesic at $A^{(2k+1)}$ is 
	\[
	X^{(e)} = \left. \frac{{\rm d}}{{\rm d} t} \log A(t) \right|_{t=1} = \log A^{(2k+1)} - \log A^{(2k)} + C'(1)  \bfone_m \bfone_n^{\top}.
	\]
	From the definition of the Sinkhorn algorithm,
	\[
		\log A^{(2k+1)}_{ij} - \log A^{(2k)}_{ij} = -\log \left( \sum_{j'} A^{(2k)}_{ij'} \right),
	\]
	which depends only on $i$.
	Hence, each row of $X^{(e)}$ is parallel to $\bfone_n$.
	On the other hand, from the definition of $\Pi_1$, each row of the m-representation $Y^{(m)}$ of a tangent vector $Y$ of $\Pi_1$ is orthogonal to $\bfone_n$.
	Therefore, from \eqref{Fisher_em}, $X$ and $Y$ are orthogonal to each other with respect to the Fisher metric.
    Hence, the e-geodesic from $A^{(2k)}$ to $A^{(2k+1)}$ is orthogonal to $\Pi_1$ with respect to the Fisher metric.
    The proof for the e-geodesic from $A^{(2k+1)}$ to $A^{(2k+2)}$ is similar.
\end{proof}

%Note that the e-projection is unique in each iteration, since $\Pi_1$ and $\Pi_2$ are m-flat.
For $A,B \in \Pi$, let
\[
    D_{\mathrm{KL}}(A \mid\mid B) = \sum_{i,j} A_{ij} \log \frac{A_{ij}}{B_{ij}}
\]
be the Kullback--Leibler divergence.
From Proposition~\ref{prop:matrix-Sinkhorn} and Lemma~\ref{lem:eproj}, we obtain the following.

\begin{corollary}\label{cor:KL-Sinkhorn}
    Assume that $A^{(0)}=A$ is a positive matrix.
	Then, each iteration of the Sinkhorn algorithm provides the unique minimizer of the Kullback--Leibler divergence:
	\begin{align*}
		D_{\mathrm{KL}}(A^{(2k+1)} \mid\mid A^{(2k)}) &= \min_{B \in \Pi_1} D_{\mathrm{KL}}(B \mid\mid A^{(2k)}), \\ D_{\mathrm{KL}}(A^{(2k+2)} \mid\mid A^{(2k+1)}) &= \min_{B \in \Pi_2} D_{\mathrm{KL}}(B \mid\mid A^{(2k+1)}).
	\end{align*}
\end{corollary}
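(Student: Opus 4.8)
The plan is to read the corollary off as a direct consequence of Proposition~\ref{prop:matrix-Sinkhorn} together with the e-projection characterization in Lemma~\ref{lem:eproj}; almost no new computation is needed, and the work lies in checking that the hypotheses of Lemma~\ref{lem:eproj} are met. The excerpt already records the two facts I would invoke. First, $\Pi_1$ and $\Pi_2$ are m-totally geodesic connected submanifolds of $\Pi\cong S_{mn-1}$, since they are affine in the $\eta$-coordinate. Second, by Proposition~\ref{prop:matrix-Sinkhorn} the e-geodesic from $A^{(2k)}$ to $A^{(2k+1)}$ is orthogonal to $\Pi_1$ at $A^{(2k+1)}$ with respect to the Fisher metric, and likewise the e-geodesic from $A^{(2k+1)}$ to $A^{(2k+2)}$ is orthogonal to $\Pi_2$ at $A^{(2k+2)}$. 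Since the Sinkhorn update manifestly produces $A^{(2k+1)}\in\Pi_1$ and $A^{(2k+2)}\in\Pi_2$, these orthogonal foot points lie in the respective submanifolds.

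With this in hand I would apply Lemma~\ref{lem:eproj} twice. Taking $M=\Pi_1$, $p=A^{(2k)}$, and $q=A^{(2k+1)}$, the orthogonality established in Proposition~\ref{prop:matrix-Sinkhorn} is exactly the hypothesis of the lemma, so $A^{(2k+1)}$ is the unique minimizer of $D_{\mathrm{KL}}(B\mid\mid A^{(2k)})$ over $B\in\Pi_1$. Taking $M=\Pi_2$, $p=A^{(2k+1)}$, and $q=A^{(2k+2)}$ gives the second identity in the same way. Uniqueness is already built into the statement of Lemma~\ref{lem:eproj} (the e-projection is the unique such point), so nothing further is required there.

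The one point needing care — and the only real obstacle — is that Lemma~\ref{lem:eproj} lives on the simplex $S_{mn-1}$, so it presumes the base point $p$ belongs to $\Pi$, i.e.\ satisfies $\bfone_m^\top p\,\bfone_n=1$. For $k\geq 1$ this holds automatically, as $A^{(2k)}\in\Pi_2\subset\Pi$. For the very first step the hypothesis only supplies a positive matrix $A^{(0)}=A$, which need not be normalized. To dispose of this I would note that the minimization is invariant under positive rescaling of the base point: for any $c>0$ and any $B\in\Pi_1$ one has $\sum_{ij}B_{ij}=1$, whence $D_{\mathrm{KL}}(B\mid\mid cA^{(0)})=D_{\mathrm{KL}}(B\mid\mid A^{(0)})-\log c$, a constant independent of $B$, so the minimizer over $\Pi_1$ is unchanged. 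The iterate $A^{(1)}$ is likewise invariant under $A^{(0)}\mapsto cA^{(0)}$, since the factor $c$ cancels against $\Diag(A^{(0)}\bfone_n)^{-1}$. Hence I may replace $A^{(0)}$ by its normalization $A^{(0)}/(\bfone_m^\top A^{(0)}\bfone_n)\in\Pi$ without affecting either side of the claimed identity, reducing to the case already covered and completing the proof.
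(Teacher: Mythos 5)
Your proposal is correct and follows exactly the paper's route: the paper derives the corollary immediately from Proposition~\ref{prop:matrix-Sinkhorn} together with Lemma~\ref{lem:eproj}, with no further argument given. Your extra care about the possibly unnormalized base point $A^{(0)}$ (reducing to $\Pi$ via scaling invariance of both the minimizer and the first Sinkhorn iterate) is a sound refinement of a detail the paper passes over silently, not a different approach.
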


\section{Information geometry of operator scaling}\label{sec:opscaling}
In this section, we present our main result on a quantum information geometric interpretation of the operator Sinkhorn algorithm.

\subsection{Prerequisites from matrix analysis}
%We use tools from matrix analysis in our proof. 
Here, we introduce necessary concepts of matrix analysis.
See \cite{HiaiPetz2014,Bhatia2009positive} for more details.

% \emph{Partitioned matrices} are matrices with the following block structure:
% \begin{align*}
%     \begin{bmatrix}
%         A_{11} & \cdots & A_{1n} \\
%         \vdots & \ddots & \vdots \\
%         A_{m1} & \cdots & A_{mn} 
%     \end{bmatrix},
% \end{align*}
% where $A_{ij} \in \C^{m'\times n'}$ for all $i$ and $j$. 
% Slightly abusing the notation, we write the set of partitioned matrices by $\C^{m\times n} \otimes \C^{m'\times n'}$.

For two matrices $A = (a_{ij}) \in \C^{p\times q}$ and $B=(b_{kl}) \in \C^{r \times s}$, their \emph{Kronecker product} $A \otimes B \in \C^{pr \times qs}$ is the partitioned matrix given by 
\begin{align*}
    A \otimes B = 
    \begin{bmatrix}
        a_{11} B & \cdots & a_{1q} B \\
        \vdots & \ddots & \vdots \\
        a_{p1} B & \cdots & a_{pq}B 
    \end{bmatrix}. %\in \C^{m\times n}\otimes \C^{m'\times n'}.
\end{align*}

For a partitioned matrix
\begin{align*}
    \begin{bmatrix}
        A_{11} & \cdots & A_{1n} \\
        \vdots & \ddots & \vdots \\
        A_{n1} & \cdots & A_{nn} 
    \end{bmatrix}
\end{align*}
with $A_{ij} \in \C^{m \times m}$ for every $(i,j)$, its \emph{partial traces} are defined as 
\begin{align*}
    \tr_1
    \begin{bmatrix}
        A_{11} & \cdots & A_{1n} \\
        \vdots & \ddots & \vdots \\
        A_{n1} & \cdots & A_{nn} 
    \end{bmatrix} &= \sum_{i=1}^n A_{ii}  \in \C^{m \times m}, \\
    \tr_2
    \begin{bmatrix}
        A_{11} & \cdots & A_{1n} \\
        \vdots & \ddots & \vdots \\
        A_{n1} & \cdots & A_{nn} 
    \end{bmatrix} &= 
    \begin{bmatrix}
        \tr A_{11} & \cdots & \tr A_{1n} \\
        \vdots & \ddots & \vdots \\
        \tr A_{n1} & \cdots & \tr A_{nn} 
    \end{bmatrix} \in \C^{n \times n}.
\end{align*}

A linear map $\Phi: \C^{n \times n} \to \C^{m \times m}$ is said to be \emph{completely positive} if it has the Kraus representation:
\[ 
    \Phi(X) = \sum_{i=1}^k A_i X A_i^\dagger,
\] 
where $A_1,\dots,A_k \in \C^{m \times n}$ and $\dagger$ denotes the Hermitian conjugate.
Then, its {dual} map $\Phi^*: \C^{m\times m} \to \C^{n\times n}$ is also completely positive with the Kraus representation
\[
    \Phi^*(X) = \sum_{i=1}^k A_i^\dagger X A_i.
\]
In quantum information theory, it is known that any quantum operation is described by a trace-preserving completely positive (TPCP) map \cite{Holevo2013,Petz2008}.

%It is convenient to work with \emph{Choi representation}.
For a linear map $\Phi: \C^{n \times n} \to \C^{m \times m}$, its \emph{Choi representation} $\CH(\Phi) \in \C^{mn \times mn}$ is defined as
\begin{align*}
    \CH(\Phi) = \sum_{i,j=1}^n E_{ij} \otimes \Phi(E_{ij}) =     \begin{bmatrix}
        \Phi(E_{11}) & \cdots & \Phi(E_{1n}) \\
        \vdots & \ddots & \vdots \\
        \Phi(E_{n1}) & \cdots & \Phi(E_{nn}) 
    \end{bmatrix},
\end{align*}
where $E_{ij}$ is the matrix unit with 1 in the $(i,j)$-th entry and 0s elsewhere.
From definition, $\tr_1\CH(\Phi)=\Phi(I_n)$ and $\tr_2\CH(\Phi)=\Phi^*(I_m)$.
In addition, Choi~\cite{Choi1975} showed the following important property.

\begin{lemma}[Choi~\cite{Choi1975}]\label{lem:choi}
$\Phi$ is completely positive if and only if $\CH(\Phi)$ is positive semidefinite.
%For a linear map $\Phi$, 
    %    \begin{enumerate}
    %     \item $\Phi$ is completely positive if and only if $\CH(\Phi) \succeq O$.
    %     \item $\Phi$ is unital if and only if $\tr_1\CH(\Phi) = I$.
    %     \item $\Phi$ is trace preserving if and only if $\tr_2\CH(\Phi) = I$.
    % \end{enumerate}
\end{lemma}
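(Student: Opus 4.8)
The plan is to prove both implications from a single explicit dictionary between vectors in $\C^{mn}$ and matrices in $\C^{m\times n}$, together with the spectral theorem for positive semidefinite matrices. The computational core, which I would isolate first as a standalone identity, is the following: writing $e_i$ for the standard basis vectors and fixing the convention that the $\C^n$-index sits in the \emph{outer} tensor factor (matching the block layout of $\CH(\Phi)$, where $\Phi(E_{ij})$ occupies block $(i,j)$), I form for any $A \in \C^{m\times n}$ the vector $v = \sum_{i=1}^n e_i \otimes (A e_i) \in \C^{mn}$, whose $i$-th length-$m$ block is the $i$-th column of $A$. A direct expansion using the mixed-product property of the Kronecker product gives
\[
v v^\dagger = \sum_{i,j=1}^n (e_i e_j^\dagger) \otimes \bigl(A e_i (A e_j)^\dagger\bigr) = \sum_{i,j=1}^n E_{ij} \otimes (A E_{ij} A^\dagger),
\]
since $e_i e_j^\dagger = E_{ij}$. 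This one identity drives both directions.

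For the forward direction, I would begin from a Kraus representation $\Phi(X) = \sum_{l=1}^k A_l X A_l^\dagger$ and substitute into the definition $\CH(\Phi) = \sum_{i,j} E_{ij}\otimes\Phi(E_{ij})$. Interchanging the finite sums and applying the identity above with $A = A_l$ yields $\CH(\Phi) = \sum_{l=1}^k v_l v_l^\dagger$, a sum of rank-one positive semidefinite matrices, hence positive semidefinite.

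For the converse, I would assume $\CH(\Phi)\succeq 0$ and use its spectral decomposition to write $\CH(\Phi) = \sum_{l=1}^k v_l v_l^\dagger$ with $v_l\in\C^{mn}$. Reversing the dictionary, I reshape each $v_l$ into a matrix $A_l\in\C^{m\times n}$ whose $i$-th column is the $i$-th length-$m$ block of $v_l$, so that $v_l = \sum_i e_i\otimes(A_l e_i)$. The identity then gives $\sum_{l} v_l v_l^\dagger = \sum_{i,j} E_{ij}\otimes\bigl(\sum_l A_l E_{ij} A_l^\dagger\bigr)$, and comparing block $(i,j)$ against $\CH(\Phi)$ forces $\Phi(E_{ij}) = \sum_l A_l E_{ij} A_l^\dagger$ for all $i,j$. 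Because $\{E_{ij}\}$ is a basis of $\C^{n\times n}$ and $\Phi$ is linear, this extends to $\Phi(X) = \sum_l A_l X A_l^\dagger$ for every $X$, which is precisely a Kraus representation, so $\Phi$ is completely positive.

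The step I expect to demand the most care is the index bookkeeping in the reshaping correspondence: one must pin down which tensor factor carries the $\C^n$-index and in what order the blocks of $\CH(\Phi)$ are arranged, so that the rank-one identity can be read off blockwise as exactly $\Phi(E_{ij})$. Once the convention is fixed consistently with the definition of $\CH$, the rest is routine linear algebra; in particular there is no analytic or topological subtlety here, since complete positivity is taken in this paper to mean the existence of a Kraus form rather than positivity under all ampliations, so no approximation or limiting argument is needed.
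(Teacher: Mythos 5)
Your proof is correct. Note that the paper itself gives no proof of this lemma---it is stated as a citation to Choi's 1975 paper---so there is no internal argument to compare against; what you have written is essentially Choi's original argument, which is the standard one: the rank-one identity $\sum_{i,j} E_{ij} \otimes (A E_{ij} A^\dagger) = v v^\dagger$ with $v = \sum_i e_i \otimes (A e_i)$ linearizes both directions, the forward direction by summing over Kraus operators and the converse by reshaping the (rescaled) eigenvectors of $\CH(\Phi) \succeq 0$ into Kraus operators $A_l \in \C^{m \times n}$. Your bookkeeping is consistent with the paper's conventions: $\CH(\Phi) = \sum_{i,j} E_{ij} \otimes \Phi(E_{ij})$ does place $\Phi(E_{ij})$ in block $(i,j)$, so putting the $\C^n$-index in the outer tensor factor is the right choice, and the blockwise comparison legitimately recovers $\Phi(E_{ij}) = \sum_l A_l E_{ij} A_l^\dagger$ on the basis $\{E_{ij}\}$, whence linearity finishes. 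Your closing remark is also apt for this paper specifically: since the paper \emph{defines} complete positivity by the existence of a Kraus representation rather than by positivity of all ampliations $\Phi \otimes \mathrm{id}_k$, your argument needs no approximation step and is a complete proof of the lemma as stated; had complete positivity been defined via ampliations, one extra (easy, finite-dimensional) equivalence would be required.
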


For an Hermitian matrix $A \in \C^{n\times n}$ (i.e., $A = A^\dagger$), let 
\begin{align*}
    A = P \Lambda P^\dagger, \quad \Lambda = \mathrm{diag} (\lambda_1,\dots,\lambda_n)
    % A = P \begin{bmatrix} 
    % \lambda_1 & &  \\
    %  & \ddots &  \\
    %  &  & \lambda_n 
    % \end{bmatrix} P^\dagger
\end{align*}
be its spectral decomposition.
Assume that the eigenvalues $\lambda_1,\dots,\lambda_n$ of $A$ are contained in the interval $[a,b]$.
Then, for a function $f: [a,b] \to \R$, we define the Hermitian matrix $f(A) \in \C^{n\times n}$ by
\begin{align*}
    f(A) = P f(\Lambda) P^\dagger, \quad f(\Lambda)=\mathrm{diag}(f(\lambda_1),\dots,f(\lambda_n)).
    % f(A) = P \begin{bmatrix} 
    % f(\lambda_1) & &  \\
    %  & \ddots &  \\
    %  &  & f(\lambda_n)
    % \end{bmatrix} P^\dagger.
\end{align*}
In particular, for a positive semidefinite matrix $A$ and the square root function $f(x) = x^{1/2}$, we denote $f(A)$ by $A^{1/2}$.
% Similarly, we define $\exp(A)$ for a Hermitian $A$ and $\log(A)$ for a positive definite $A$.
A function $f$ is said to be \emph{operator monotone} if $f(A) \succeq f(B)$ holds for every $A$ and $B$ satisfying $A \succeq B$.

Given $A \in \C^{n \times n}$ and $Q \in \C^{n \times n}$, the (continuous) \emph{Lyapunov equation} $A^\dagger X + X A = Q$ has a unique solution of $X$ given by
\begin{align}\label{eq:integral}
    X = \int_{0}^{\infty} e^{-tA^\dagger} Q e^{tA} dt,
\end{align}
if $A$ is positively stable (i.e., all the eigenvalues of $A$ are contained in the open right half-plane of $\C$).
% A (continuous) \emph{Lyapunov equation} is the following matrix equation 
% \begin{align*}
%     A^\dagger X + X A = Q,
% \end{align*}
% where $A$ and $Q$ are given matrices.
% It is known that if $A$ is positively stable (i.e., all the eigenvalues of $A$ are contained in the open right half-plane of $\C$), then the Lyapunov equation has a unique solution.
% \begin{align*}
%     X = \int_{0}^{\infty} e^{-tA^\dagger} Q e^{tA} dt.
% \end{align*}
Furthermore, if both $A$ and $Q$ are Hermitian, then the solution $X$ is also Hermitian.

For positive definite matrices $A \in \C^{n \times n}$ and $B \in \C^{n \times n}$, their \emph{geometric mean} $A\# B \in \C^{n \times n}$ is defined as 
\begin{align*}
    A \# B = A^{1/2}(A^{-1/2}B A^{-1/2})^{1/2} A^{1/2}.
\end{align*}
It is known that $A \# B$ is positive definite and $A\# B = B \# A$.
Furthermore, $A\# B$ is the unique positive definite solution of the \emph{Riccati equation} $X A^{-1} X = B$.

\subsection{Quantum information geometry}
The theory of classical information geometry has been extended to quantum systems.
Here, we briefly introduce several concepts of quantum information geometry.
See \cite{AmariNagaoka2007} for more details.

In quantum information theory~\cite{Holevo2013,Petz2008}, each quantum system is associated with a complex Hilbert space.
For example, a qubit system is associated with $\mathbb{C}^2$.
A state of a quantum system associated with $\C^n$ is described by a \emph{density matrix} $\rho \in \C^{n \times n}$, which is a positive semidefinite Hermitian matrix of trace one.
Let 
\begin{align*}
    S(\C^n)=\left\{ \rho \in \C^{n \times n} \mid \rho \succ 0, \ \tr \rho = 1 \right\}
\end{align*}
be the set of positive definite density matrices on $\C^n$.
In quantum information geometry, the set $S(\C^n)$ is regarded as a $(n^2-1)$-dimensional Riemannian manifold.
We explain the information geometric structure of $S(\C^n)$ following \cite{Fujiwara2015}.
%Note that we regard $S(\C^n)$ as a Riemannian manifold although complex.

First, we define a Riemannian metric on $S(\C^n)$.
Whereas the Riemannian metric is uniquely specified as the Fisher metric by Chentsov's theorem in classical information geometry, such uniqueness no longer holds in quantum information geometry.
Specifically, Petz~\cite{Petz1996} investigated Riemannian metrics on $S(\C^n)$ that are monotone under TPCP maps and showed that there is a one-to-one correspondence between monotone Riemannian metrics and operator monotone functions. 
Among them, here we focus on the \emph{symmetric logarithmic derivative} (SLD) metric, which corresponds to the function $t \mapsto (t+1)/2$.
For a tangent vector $X$ at $\rho \in S(\C^n)$, its e-representation $X^{(e)} \in \C^{n \times n}$ is defined by the unique Hermitian solution of the Lyapunov equation
\begin{align}\label{eq:SLD}
    X^{(e)} \rho + \rho X^{(e)} = 2 X \rho.
\end{align}
From \eqref{eq:integral}, $X^{(e)}$ is given by
\[
    X^{(e)} = 2 \int_{0}^{\infty} e^{-t\rho} (X \rho) e^{t\rho} {\rm d} t.
\]
On the other hand, the m-representation $X^{(m)} \in \C^{n \times n}$ of a tangent vector $X$ at $\rho \in S(\C^n)$ is defined by $X \rho$.
%Note that this Lyapunov equation has a unique Hermitian solution because $\rho \succ 0$ and $X(\rho)$ is Hermitian.
Then, the inner product of two tangent vectors $X$ and $Y$ at $\rho \in S(\C^n)$ with respect to the SLD metric is given by 
\[
g^{\mathrm{S}}(X,Y) = \tr (X^{(e)} Y^{(m)}) = 2 \tr \int_{0}^{\infty} e^{-t\rho} (X \rho) e^{t\rho} (Y \rho) {\rm d} t.
\]
The SLD metric has been found to play a central role in extending the Cram\'{e}r--Rao inequality to quantum estimation \cite{Petz2008}.

Next, we introduce dual affine connections on $S(\C^n)$.
Similarly to the classical case, the m-connection on $S(\C^n)$ is defined as the torsion-free affine connection induced by the natural embedding of $S(\C^n)$ into $\R^{n^2}$:
\[
\rho \mapsto (\rho_{11},\dots,\rho_{nn},{\rm Re} \ \rho_{12},{\rm Im} \ \rho_{12},\dots,{\rm Re} \ \rho_{n-1,n},{\rm Im} \ \rho_{n-1,n}).
\]
Thus, the m-geodesic from $\rho_1$ to $\rho_2$ is simply given by $\rho(t) = (1-t)\rho_1 + t\rho_2$ and the space $S(\C^n)$ is m-flat.
On the other hand, the e-connection on $S(\C^n)$ is not unique and depends on which Riemannian metric is introduced to $S(\C^n)$.
Namely, for each Riemannian metric $g$ on $S(\C^n)$, the e-connection on $S(\C^n)$ is defined to be the dual of the m-connection with respect to $g$.
%Let $\nabla$ be the flat connection induced by the Euclidean geometry in $\caS_{++}$ and $\nabla^*$ be its dual connection.
% Since $\nabla$ is the flat connection, the geodesic $\gamma$ from $\rho_0$ to $\rho_1$ is given by 
% \[
%     \gamma(t) = (1-t)\rho_0 + t\rho_1,
% \]
% which is the standard Euclidean geodesic of $\caS_{++}$.
%On the other hand, it is known that the $\nabla^*$-geodesic from $\rho_0$ to $\rho_1$ is given by

Let us adopt the SLD metric now.
The following lemma is known in the literature~\cite{AmariNagaoka2007,Fujiwara2015}, but we provide a self-contained proof for the completeness.
\begin{lemma}\label{lem:SLD-geodesic}
The e-geodesic from $\rho_1$ to $\rho_2$ is explicitly given by \begin{align}\label{eq:SLD-geodesic}
    \rho(t) = C(t) K^t \rho_1 K^t,
\end{align} 
where $K = \rho_1^{-1} \# \rho_2$ is the matrix geometric mean of $\rho_1^{-1}$ and $\rho_2$ and $C(t) = 1/\tr (K^t \rho_1 K^t)$ is the normalization constant for $\tr \rho(t)=1$.
\end{lemma}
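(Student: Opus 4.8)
The plan is to verify directly that the proposed curve is autoparallel for the e-connection and meets the prescribed endpoints, rather than to integrate a geodesic equation. First I would dispose of the boundary conditions. Since $K = \rho_1^{-1}\#\rho_2$ is by definition the unique positive definite solution of the Riccati equation $X A^{-1} X = B$ with $A = \rho_1^{-1}$ and $B = \rho_2$, we have $K\rho_1 K = \rho_2$. Hence $\rho(0) = C(0)\rho_1$ and $\rho(1) = C(1)K\rho_1 K = C(1)\rho_2$, and because $\tr\rho_1 = \tr\rho_2 = 1$ the normalization forces $C(0) = C(1) = 1$, giving $\rho(0) = \rho_1$ and $\rho(1) = \rho_2$. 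Each $\rho(t) = C(t) K^t\rho_1 K^t$ is a congruence transform of a positive definite matrix, rescaled to unit trace, so the curve lies in $S(\C^n)$.

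Next I would translate the geodesic condition $\nabla^{(e)}_{\dot\rho}\dot\rho = 0$ into a statement about e-representations, since only the metric and the m-connection are given concretely. Because the m-connection is the flat embedding connection, a vector field $Y(t)$ along $\rho(t)$ is m-parallel exactly when its m-representation $Y^{(m)}(t)$ is a constant traceless Hermitian matrix. Combining the duality relation $\frac{{\rm d}}{{\rm d}t} g^{\mathrm{S}}(X,Y) = g^{\mathrm{S}}(\nabla^{(e)}_{\dot\rho}X, Y) + g^{\mathrm{S}}(X, \nabla^{(m)}_{\dot\rho}Y)$ with $g^{\mathrm{S}}(X,Y) = \tr(X^{(e)} Y^{(m)})$, one sees that a field $X$ is e-parallel iff $\tr\!\big(\frac{{\rm d}}{{\rm d}t} X^{(e)}(t)\, W\big) = 0$ for every traceless Hermitian $W$, that is, iff $\frac{{\rm d}}{{\rm d}t} X^{(e)}(t) \in \R I$. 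Taking $X = \dot\rho$, the curve is an e-geodesic precisely when the e-representation of its own velocity, which I denote $V(t) := (\dot\rho(t))^{(e)}$, is constant modulo scalar multiples of the identity.

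It then remains to compute $V(t)$ for the explicit curve. Writing $H = \log K$, which is Hermitian since $K \succ 0$, and $\Gamma(t) = K^t\rho_1 K^t = e^{tH}\rho_1 e^{tH}$, the product rule and $\frac{{\rm d}}{{\rm d}t} e^{tH} = H e^{tH} = e^{tH} H$ give the key identity $\dot\Gamma(t) = H\Gamma(t) + \Gamma(t) H$. With $\rho = C\Gamma$ this yields $\dot\rho = (H\rho + \rho H) + \frac{\dot C}{C}\rho$. I would then solve the defining Lyapunov equation $V\rho + \rho V = 2\dot\rho$ by the ansatz $V(t) = 2H + \frac{\dot C(t)}{C(t)} I$, which works because $(2H + cI)\rho + \rho(2H + cI) = 2(H\rho + \rho H) + 2c\rho$. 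Since $H = \log K$ does not depend on $t$, we get $\frac{{\rm d}}{{\rm d}t} V(t) = \frac{{\rm d}}{{\rm d}t}\big(\frac{\dot C}{C}\big) I \in \R I$, which is exactly the e-geodesic condition established above.

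Finally, since an e-geodesic is determined by its initial point and initial velocity, and the curve above is an e-geodesic through $\rho_1$ that reaches $\rho_2$ at $t=1$, it is the e-geodesic from $\rho_1$ to $\rho_2$. I expect the main obstacle to be the second step, namely pinning down the autoparallel characterization $\frac{{\rm d}}{{\rm d}t} V(t) \in \R I$ from the abstract definition of the dual e-connection, since the torsion of this connection precludes shortcuts through e-affine coordinates; the final computation collapses cleanly once one isolates the identity $\dot\Gamma = H\Gamma + \Gamma H$ and the geometric-mean relation $K\rho_1 K = \rho_2$.
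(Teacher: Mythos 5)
Your proof is correct and follows essentially the same route as the paper: both exploit duality with the flat m-connection to show that e-parallelism changes the e-representation only by multiples of $I$, then solve the Lyapunov equation to find $\dot\rho(t)^{(e)} = 2\log K + \frac{\dot C(t)}{C(t)} I$. The only difference is cosmetic — you state the parallelism criterion infinitesimally as $\frac{{\rm d}}{{\rm d}t}\dot\rho(t)^{(e)} \in \R I$, whereas the paper derives the global e-parallel transport formula $\Pi^{(e)}_{\rho,\sigma} L = L - \tr(\sigma L) I$ and verifies $\Pi^{(e)}_{\rho_1,\rho(t)}\dot\rho(0)^{(e)} = \dot\rho(t)^{(e)}$, which is the integrated form of the same fact.
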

\begin{proof}
For $\rho, \sigma \in \Pi$ and a tangent vector $M \in T_\rho(S(\C^n))$ in m-representation,
let $\Pi^{(m)}_{\rho, \sigma} M$ be the m-parallel transport of $M$ to $T_\sigma(S(\C^n))$.
Similarly, $\Pi^{(e)}_{\rho, \sigma} L$ be the e-parallel transport of a tangent vector $L \in T_\rho(S(\C^n))$ (in e-representation) to $T_\sigma(S(\C^n))$.
By the definition of m-connection, $\Pi^{(m)}_{\rho, \sigma} M = M$.
We first show a formula for e-parallel transport
\begin{align}\label{eq:e-parallel-transport}
    \Pi^{(e)}_{\rho, \sigma} L = L - \tr(\sigma L)I.
\end{align}
Since m- and e-connections are dual, we must have 
\[
    g^{\mathrm{S}}\left(\Pi^{(m)}_{\rho, \sigma} M, \Pi^{(e)}_{\rho, \sigma} L\right)_\sigma = 
    g^{\mathrm{S}}(M, L)_\rho.
\]
Therefore, we must have
\[
    \tr(M \Pi^{(e)}_{\rho, \sigma} L) = \tr(M L).
\]
Putting $\Pi^{(e)}_{\rho, \sigma} L = L + L'$, we have $\tr(M L') = 0$ for all Hermite $M$ with $\tr M = 0$. 
Thus, $L' = \alpha I$ for some $\alpha \in \C$.
Furthermore, since $\Pi^{(e)}_{\rho, \sigma} L = L + \alpha I \in T_\sigma(S(\C^n))$, we have $\tr(\sigma (L+\alpha I)) = \tr(\sigma L) + \alpha \tr\sigma = \tr(\sigma L) + \alpha = 0$.
Thus, $\alpha = - \tr(\sigma L)$ and we have obtained the desired formula \eqref{eq:e-parallel-transport}.

Now, let us show that \eqref{eq:geodesic} is the e-geodesic from $\rho_1$ to $\rho_2$.
It is easy to see that $\rho(0) = \rho_1$ and $\rho(1) = \rho_2$ by the definition of matrix geometric mean.
To show that $\rho(t)$ is indeed the e-geodesic, it is suffices to show that $\rho(t)$ is e-totally geodesic, i.e., $\Pi^{(e)}_{\rho_1, \rho(t)} \dot{\rho}(0)^{(e)} = \dot{\rho}(t)^{(e)}$, where $\dot{\rho}(t)^{(e)}$ is the e-representation of a tangent vector $\dot{\rho}(t)$.
Since $\rho(t) = C(t)K^t \rho_1 K^t$, we have
\begin{align*}
    \dot{\rho}(t) 
    &= \dot{C}(t)K^t\rho_1 K^t + C(t)(\log K \cdot K^t \rho_1 K^t + K^t \rho_1 K^t \cdot \log K) \\
    &= \frac{\dot{C}(t)}{C(t)} \rho(t) + \log K \rho(t) + \rho(t) \log K \\
    &= \frac{1}{2} \left[ \left(\frac{\dot{C}(t)}{C(t)} I + 2 \log K \right) \rho(t) + \rho(t) \left(\frac{\dot{C}(t)}{C(t)} I + 2 \log K \right) \right].
\end{align*} 
Thus, 
\begin{align*}
    \dot{\rho}(t)^{(e)} &= \frac{\dot{C}(t)}{C(t)} I + 2 \log K = -2 \tr(\rho(t)\log K) I + 2 \log K, \\
    \dot{\rho}(0)^{(e)} &= \dot{C}(0) I + 2 \log K, 
\end{align*}
where we used
\[
    \frac{\dot{C}(t)}{C(t)} 
    = - \frac{\tr(\log K \cdot K^t \rho_1 K^t + K^t \rho_1 K^t \cdot \log K)}{\tr(K^t \rho K^t)} 
    = -2 \tr(\rho(t)\log K) 
\]
by the direct calculation.
Now, using the formula \eqref{eq:e-parallel-transport} for e-parallel transport,
\begin{align*}
    \Pi^{(e)}_{\rho_1, \rho(t)} \dot{\rho}(0)^{(e)} 
    &= \dot{\rho}(0)^{(e)} - \tr(\rho(t) \dot{\rho}(0)^{(e)}) I \\
    &= \dot{C}(0) I + 2 \log K - \tr (\rho(t) \dot{C}(0) + 2\rho(t)\log K) I\\
    &= -2 \tr(\rho(t)\log K) I + 2 \log K \\
    &= \dot{\rho}(t)^{(e)}.
\end{align*}
This completes the proof.
\end{proof}

Note that this e-connection is not torsion-free\footnote{The e-connection on $S(\C^n)$ is torsion-free if and only if the Bogoliubov--Kubo--Mori metric \eqref{eq:Bmetric} is adopted \cite{AmariNagaoka2007,Fujiwara2015}.}.
Thus, the space $S(\C^n)$ is not dually flat under the SLD metric and it is not clear whether there exists some canonical divergence and analogue of the generalized Pythagoream theorem in this case.

\subsection{Main result}
Now, we present our main result on the operator Sinkhorn algorithm.
Recall that, given a completely positive linear map $\Phi: \C^{n\times n} \to \C^{m\times m}$, the {operator scaling} problem is to find nonsingular Hermitian matrices $L\in \C^{m \times m}$ and $R \in \C^{n\times n}$ such that $\Phi_{L,R}(I_n) = m^{-1} I_m$ and $\Phi_{L,R}^*(I_m) = n^{-1} I_n$, where $\Phi_{L,R}(X) = L\Phi(R^{\dagger}XR)L^{\dagger}$.
% This problem is equivalent to find $L,R$ such that $\Phi_{L,R}(I) = m^{-1} I$ and $\Phi_{L,R}^*(I) = n^{-1} I$.
%Thus, we consider the latter setting for convenience.
Starting from $\Phi^{(0)} = \Phi$, each iteration of the operator Sinkhorn algorithm is defined as
\begin{align*}
    \Phi^{(2k+1)} &= \Phi_{L,I_n}^{(2k)} \quad \text{where $L=\frac{1}{\sqrt{m}}\Phi^{(2k)}(I_n)^{-1/2}$,}  \\
    \Phi^{(2k+2)} &= \Phi_{I_m,R}^{(2k+1)} \quad \text{where $R=\frac{1}{\sqrt{n}}(\Phi^{(2k+1)})^*(I_m)^{-1/2}$.}
\end{align*}
%Note that $\Phi^{(2k)}(I)=I$ and $(\Phi^{(2k+1)})^*(I)=I$.

For quantum information geometric consideration, we identify a completely positive map with its Choi representation.
Then, we obtain the following formula.

\begin{lemma}\label{lem:choi2}
The Choi representation of $\Phi_{L,R}$ is given by
    \begin{align*}
\CH(\Phi_{L,R})=(R^{\dagger} \otimes L) \CH(\Phi) (R \otimes L^{\dagger}).
    \end{align*}
\end{lemma}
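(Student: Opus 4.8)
The plan is to work directly from the definition $\CH(\Phi_{L,R}) = \sum_{i,j=1}^n E_{ij}\otimes\Phi_{L,R}(E_{ij})$ and to peel off the two congruences out of which $\Phi_{L,R}$ is built from $\Phi$, namely the input congruence $X\mapsto R^\dagger X R$ and the output congruence $Y\mapsto LYL^\dagger$, using the mixed-product property $(A\otimes B)(C\otimes D) = (AC)\otimes(BD)$ of the Kronecker product. Since $\CH(\cdot)$ is linear in the map and both sides of the claimed identity are linear in $\Phi$, I would first establish the formula for a single elementary Kraus term $\Phi(X) = AXA^\dagger$ and then recover the general case $\Phi=\sum_i A_i(\cdot)A_i^\dagger$ by summation.

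The output congruence is the easy half: because $L$ and $L^\dagger$ never touch the matrix units $E_{ij}$ sitting in the first tensor slot, they factor straight out of the sum, giving $\sum_{i,j}E_{ij}\otimes L\,\Phi(R^\dagger E_{ij}R)\,L^\dagger = (I_n\otimes L)\left[\sum_{i,j}E_{ij}\otimes\Phi(R^\dagger E_{ij}R)\right](I_n\otimes L^\dagger)$, which already produces the $I_n\otimes L$ and $I_n\otimes L^\dagger$ factors of the statement. For the input congruence I would expand $R^\dagger E_{ij}R = \sum_{p,q}(R^\dagger)_{pi}R_{jq}E_{pq}$ in the matrix-unit basis, push it through $\Phi$ by linearity, and re-collect the resulting double sum so that the coefficients $(R^\dagger)_{pi}R_{jq}$ are recognized as the entries generated by sandwiching $\CH(\Phi)$ by a single Kronecker factor acting on the first slot. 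In the rank-one case this is cleanest in vectorized form: writing $\CH(\Phi)=\mathbf{w}\mathbf{w}^\dagger$ with $\mathbf{w}=\sum_i e_i\otimes A e_i$, one observes that replacing $A$ by the Kraus operator of $\Phi_{L,R}$ transforms $\mathbf{w}$ by one Kronecker factor per slot, and the congruence can then be read off.

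Assembling the first-slot and second-slot factors via the mixed-product property should then yield the stated form $(R^\dagger\otimes L)\CH(\Phi)(R\otimes L^\dagger)$. The main obstacle is exactly the index bookkeeping in the first tensor factor: the identity $ME_{ij}N=\sum_{p,q}M_{pi}N_{jq}E_{pq}$ interchanges the row and column roles of the indices, so one must check carefully whether the first-slot congruence emerges as $R^\dagger,R$ or instead as their transposes/conjugates. This is the only place where a genuine subtlety rather than routine algebra enters, and it is precisely where the chosen Choi-representation convention has to be pinned down. I would therefore verify this step on the smallest nontrivial example (say $\Phi(X)=AXA^\dagger$ with $2\times 2$ matrices and a Hermitian $R$) to confirm that the transpose conventions are consistent with the definition of $\CH$ used here before committing to the final algebraic rearrangement.
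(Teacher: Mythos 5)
Your route is essentially the paper's: expand $\CH(\Phi_{L,R})=\sum_{i,j}E_{ij}\otimes\Phi_{L,R}(E_{ij})$, pull the output congruence out of the second tensor slot as $(I_n\otimes L)\,\cdot\,(I_n\otimes L^{\dagger})$, expand $R^{\dagger}E_{ij}R$ in matrix units to handle the first slot, and reassemble via the mixed-product property. The paper does exactly this, splitting $\Phi_{L,R}=(\Phi_{L,I_n})_{I_m,R}$ into the two congruences; your Kraus-term/vectorization detour is an equivalent repackaging, not a different argument.

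However, the one step you defer to a numerical check --- whether the first-slot factors emerge as $R^{\dagger},R$ or as transposes/conjugates --- is not a pedantic worry: it is precisely where the paper's printed proof slips. Carrying your expansion $R^{\dagger}E_{ij}R=\sum_{p,q}\bar{R}_{ip}R_{jq}E_{pq}$ through consistently (or applying $\vectorize(LA_iR^{\dagger})=(\bar{R}\otimes L)\vectorize(A_i)$ to each Kraus operator) yields
\begin{align*}
\CH(\Phi_{L,R})=(\bar{R}\otimes L)\,\CH(\Phi)\,(R^{\top}\otimes L^{\dagger}),
\end{align*}
whereas the paper's chain of equalities silently turns $\bar{R}_{ik}R_{jl}$ into $\bar{R}_{ki}R_{lj}$, which is legitimate only when $R^{\top}=R$. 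So the identity as stated holds for complex-symmetric (in particular real) $R$, but not for general $R$. For the Hermitian scaling matrices of the Sinkhorn iteration one has $\bar{R}=R^{\top}$, and since under this Choi convention $\tr_2\CH(\Phi)=\Phi^{*}(I_m)^{\top}$ (a second transpose the paper also drops), the two transposes cancel: the matrix appearing in Lemma~\ref{lem:opsink} is exactly $(\tr_2\rho_{2k+1})^{-1/2}/\sqrt{n}$, and Theorem~\ref{thm:opscaling} is unaffected, as the congruence is still by a Hermitian positive definite matrix of the form $I_n\otimes L$ or $B\otimes I_m$. Your instinct to pin down the convention on the smallest example would have caught this; to turn the proposal into a proof, replace the spot-check by the $\vectorize$ identity above so the transpose bookkeeping is established rather than sampled.
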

\begin{proof}
Since $\Phi_{L,I_n}(X) = L \Phi(X) L^{\dagger}$,
    \begin{align*}
    \CH(\Phi_{L,I_n}) = \sum_{i,j} E_{ij} \otimes L \Phi(E_{ij}) L^{\dagger} = (I_n \otimes L) \CH(\Phi) (I_n \otimes L^{\dagger}),
    \end{align*}
where we used the formula $(A\otimes B)(C \otimes D) = AC \otimes BD$ in the second equality.
    
On the other hand, since $\Phi_{I,R}(X) = \Phi(R^{\dagger} X R)$,
    \begin{align*}
    \CH(\Phi_{I_m,R}) = \sum_{i,j} E_{ij} \otimes \Phi(R^{\dagger} E_{ij} R) = (R^{\dagger} \otimes I_m) \CH(\Phi) (R \otimes I_m),
    \end{align*}
where we used 
    \begin{align*}
    \Phi(R^{\dagger} E_{ij} R) = \Phi \left( \sum_{k,l} \bar{R}_{ik} R_{jl} E_{kl} \right) = \sum_{k,l} \bar{R}_{ik} R_{jl} \Phi(E_{kl}) = \sum_{k,l} \bar{R}_{ki} R_{lj} \CH(\Phi)_{kl}.
    \end{align*}

Therefore, from $\Phi_{L,R} = (\Phi_{L,I_n})_{I_m,R}$,
    \begin{align*}
    \CH(\Phi_{L,R}) &= (R^{\dagger} \otimes I_m) \CH(\Phi_{L,I_n}) (R \otimes I_m) \\
    &= (R^{\dagger} \otimes I_m) (I_n \otimes L) \CH(\Phi) (I_n \otimes L^{\dagger}) (R \otimes I_m) \\
    &=(R^{\dagger} \otimes L) \CH(\Phi) (R \otimes L^{\dagger}).
    \end{align*}
\end{proof}

Let $\rho_k=\CH(\Phi^{(k)})\in \C^{mn \times mn}$ be the Choi representation of $\Phi^{(k)}$.
Note that $\rho_k \succeq 0$ from Lemma~\ref{lem:choi}.
%Since every completely positive map can be identified with its Choi representaion from Lemma~\ref{lem:choi}, we define $\rho_k=\CH(\Phi^{(k)})\succeq 0$.
By using Lemma~\ref{lem:choi2}, the operator Sinkhorn algorithm is rewritten as follows.

\begin{lemma}\label{lem:opsink}
    Each iteration of the operator Sinkhorn algorithm is given by
    \begin{align*}
    \rho_{2k+1} = (I_n \otimes L)\rho_{2k} (I_n \otimes L),&\quad \text{$L=\frac{1}{\sqrt{m}}(\tr_1\rho_{2k})^{-1/2}$,} \\ %\tag{left normalization} \\
    \rho_{2k+2} = (R \otimes I_m) \rho_{2k+1} (R \otimes I_m),&\quad \text{$R=\frac{1}{\sqrt{n}}(\tr_2\rho_{2k+1})^{-1/2}$}. %\tag{right normalization}
    \end{align*}
%    for $k=0,1,2,\dots$. 
%    Furthermore, we have $\tr_1 \rho_{2k+1} = I$ and $\tr_2 \rho_{2k} = I$ for $k = 0,1,2,\dots$.
\end{lemma}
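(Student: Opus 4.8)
The plan is to apply Lemma~\ref{lem:choi2} to each of the two normalization steps and then rewrite the scaling factors using the partial-trace identities $\tr_1\CH(\Phi)=\Phi(I_n)$ and $\tr_2\CH(\Phi)=\Phi^*(I_m)$ recorded just after the definition of the Choi representation. Since $\rho_k=\CH(\Phi^{(k)})$, taking the Choi representation of each update turns the operator-level recursion into a matrix-level recursion on $\C^{mn\times mn}$, and the only thing that needs checking is that the factors $L$ and $R$ take the stated form.

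First I would treat the left-normalization step $\Phi^{(2k+1)}=\Phi^{(2k)}_{L,I_n}$ with $L=\tfrac{1}{\sqrt m}\Phi^{(2k)}(I_n)^{-1/2}$. Applying $\CH$ and invoking Lemma~\ref{lem:choi2} with the right factor equal to $I_n$ gives
\[
    \rho_{2k+1}=\CH(\Phi^{(2k)}_{L,I_n})=(I_n\otimes L)\,\rho_{2k}\,(I_n\otimes L^{\dagger}).
\]
To match the claimed formula I would rewrite $L$ in terms of $\rho_{2k}$: from $\Phi^{(2k)}(I_n)=\tr_1\CH(\Phi^{(2k)})=\tr_1\rho_{2k}$ we get $L=\tfrac{1}{\sqrt m}(\tr_1\rho_{2k})^{-1/2}$. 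Because $\tr_1\rho_{2k}$ is positive definite, its inverse square root is Hermitian, so $L^{\dagger}=L$ and the conjugation collapses to $(I_n\otimes L)\rho_{2k}(I_n\otimes L)$, which is exactly the first line of the statement.

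The right-normalization step is entirely symmetric. For $\Phi^{(2k+2)}=\Phi^{(2k+1)}_{I_m,R}$ with $R=\tfrac{1}{\sqrt n}(\Phi^{(2k+1)})^*(I_m)^{-1/2}$, Lemma~\ref{lem:choi2} with the left factor $I_m$ yields $\rho_{2k+2}=(R^{\dagger}\otimes I_m)\rho_{2k+1}(R\otimes I_m)$, and using $(\Phi^{(2k+1)})^*(I_m)=\tr_2\rho_{2k+1}$ together with the Hermiticity of the inverse square root gives $R=R^{\dagger}=\tfrac{1}{\sqrt n}(\tr_2\rho_{2k+1})^{-1/2}$, producing the second line.

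I do not expect a genuine obstacle here: the lemma is a direct translation of the operator Sinkhorn updates into Choi space. The only point requiring a moment's attention is the observation that $L$ and $R$ are Hermitian, which is what allows the daggers in the conjugating factors to be dropped; this relies on the positive definiteness of $\tr_1\rho_{2k}$ and $\tr_2\rho_{2k+1}$, valid whenever the iterates remain completely positive with full-rank marginals.
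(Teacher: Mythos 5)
Your proposal is correct and matches the paper's (implicit) argument exactly: the paper derives Lemma~\ref{lem:opsink} directly from Lemma~\ref{lem:choi2} together with the identities $\tr_1\CH(\Phi)=\Phi(I_n)$ and $\tr_2\CH(\Phi)=\Phi^*(I_m)$, just as you do. Your added remark that the Hermiticity of $L$ and $R$ (via positive definiteness of the marginals) lets the daggers collapse is the same observation the paper relies on silently, so there is nothing to correct.
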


Let
\begin{align*}
	\Pi &= \{ \rho \in \C^{mn \times mn} \mid \rho \succ O, \ {\rm tr} \ \rho =1 \}, \\
	\Pi_1 &= \{ \rho \in \C^{mn \times mn} \mid \rho \succ O, \ {\rm tr}_1 \rho = m^{-1} I_m \} \subset \Pi, \\
	\Pi_2 &= \{ \rho \in \C^{mn \times mn} \mid \rho \succ O, \ {\rm tr}_2 \rho = n^{-1} I_n \} \subset \Pi.
\end{align*}
We identify $\Pi$ with $S(\C^{mn})$ and introduce the corresponding information geometric structure with the SLD metric.
From the definition of $\Pi_1$ and $\Pi_2$, they are m-totally geodesic connected submanifolds of $\Pi$.
%In this way, operator Sinkhorn algorithm is viewed as updating positive semidefinite matrices.

Since the partial trace of a positive definite matrix is also positive definite, we have the following.

\begin{lemma}\label{lem:oppos}
    If $\rho_0 \succ O$, then $\rho_{2k+1} \in \Pi_1$ and $\rho_{2k+2} \in \Pi_2$ for every $k$. 
\end{lemma}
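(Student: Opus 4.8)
The plan is to prove both claims simultaneously by induction on $k$, treating the even-to-odd and odd-to-even transitions as the two halves of a single inductive step built on the iteration formulas of Lemma~\ref{lem:opsink}. Within each half I would first secure positive definiteness and only then verify the marginal condition, since positive definiteness is exactly what makes the inverse square roots defining $L$ and $R$ well defined. The base case is the hypothesis $\rho_0 \succ O$.

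For the step from $\rho_{2k}$ to $\rho_{2k+1}$: assuming $\rho_{2k} \succ O$, the partial trace $\tr_1 \rho_{2k}$ is positive definite (a partial trace of a positive definite matrix is positive definite, as noted just before the statement), so $L = \frac{1}{\sqrt{m}}(\tr_1\rho_{2k})^{-1/2}$ is a well-defined positive definite Hermitian matrix. Then $I_n \otimes L$ is positive definite and invertible, and $\rho_{2k+1} = (I_n \otimes L)\rho_{2k}(I_n \otimes L)$ is a congruence of $\rho_{2k}$ by the invertible Hermitian matrix $I_n \otimes L$, hence positive definite. The marginal condition then follows from the pull-out identity $\tr_1[(I_n \otimes A)\,M\,(I_n \otimes B)] = A\,(\tr_1 M)\,B$; substituting $A = B = L$ and $M = \rho_{2k}$ gives $\tr_1\rho_{2k+1} = L\,(\tr_1\rho_{2k})\,L = \frac{1}{m} I_m$, so $\rho_{2k+1} \in \Pi_1$. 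The step from $\rho_{2k+1}$ to $\rho_{2k+2}$ is entirely analogous, using instead the identity $\tr_2[(R \otimes I_m)\,M\,(S \otimes I_m)] = R\,(\tr_2 M)\,S$ with $R = S = \frac{1}{\sqrt{n}}(\tr_2\rho_{2k+1})^{-1/2}$, which yields $\tr_2\rho_{2k+2} = \frac{1}{n} I_n$ and hence $\rho_{2k+2} \in \Pi_2$.

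The only nonroutine point is verifying the two pull-out identities for the partial traces, and this is where I would spend the care. I would establish them by writing $M$ in block form with blocks $M_{ij} \in \C^{m \times m}$ and using that $I_n \otimes A$ is block diagonal with $A$ in each diagonal block, whereas $R \otimes I_m$ has $(i,j)$ block $R_{ij} I_m$; a direct computation of the block structure of the triple products, followed by the definition of $\tr_1$ as the sum of diagonal blocks and $\tr_2$ as the blockwise trace, produces the identities. I do not anticipate a genuine obstacle here, as these are standard facts; the main subtlety is bookkeeping the block indices and confirming that $L$ and $R$, being real powers of Hermitian positive definite matrices, are themselves Hermitian, so that the congruences indeed preserve positive definiteness and the inductive hypothesis propagates.
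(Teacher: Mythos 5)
Your proof is correct and takes essentially the same route as the paper, which states the lemma as an immediate consequence of the fact that the partial trace of a positive definite matrix is positive definite (so $L$ and $R$ are well defined, Hermitian positive definite, and the congruences by $I_n \otimes L$ and $R \otimes I_m$ preserve positive definiteness), with the marginal normalizations holding by construction. Your pull-out identities $\tr_1[(I_n \otimes L)\,M\,(I_n \otimes L)] = L(\tr_1 M)L$ and $\tr_2[(R \otimes I_m)\,M\,(R \otimes I_m)] = R(\tr_2 M)R$ are just the Choi-representation form of the map-level computation $L\,\Phi^{(2k)}(I_n)\,L^{\dagger} = \frac{1}{m}I_m$ already noted in the paper, so your induction simply makes the paper's one-line justification explicit.
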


Then, the operator Sinkhorn algorithm is interpreted as alternating e-projections as follows.

\begin{theorem}\label{thm:opscaling}
    % Let $\rho_0$ be the Choi-representation of a TPCP map $\Phi$ (i.e., $\tr_2\rho_0 = I$), $\rho_1$ be the left normalization of $\rho_0$, and $\Pi_1 = \{\rho : \tr_1 \rho = I \}$.
    % Assume that $\rho_0 \succ O$.
    % Then, the e-geodesic from $\rho_0$ to $\rho_1$ is orthogonal to a m-flat space $\Pi_1$ at $\rho_1$ in the SLD inner product. 
    Assume that $\rho_0 \succ O$.
    Then, each iteration of the operator Sinkhorn algorithm coincides with the e-projection onto $\Pi_1$ or $\Pi_2$ with respect to the SLD metric.
    Namely, $\rho_{2k+1}$ (resp. $\rho_{2k+2}$) is the unique point in $\Pi_1$ (resp. $\Pi_2$) such that the e-geodesic from $\rho_{2k}$ to $\rho_{2k+1}$ (resp. from $\rho_{2k+1}$ to $\rho_{2k+2}$) is orthogonal to $\Pi_1$ (resp. $\Pi_2$) with respect to the SLD metric for every $k$.
\end{theorem}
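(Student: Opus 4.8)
The plan is to mirror the classical argument of Proposition~\ref{prop:matrix-Sinkhorn}, replacing the element-wise logarithm and the simplex e-geodesic by the explicit SLD e-geodesic of Lemma~\ref{lem:SLD-geodesic}. I would treat the left-normalization step $\rho_{2k}\to\rho_{2k+1}$ in full; the right-normalization step is entirely symmetric, swapping $\tr_1$ for $\tr_2$ and $I_n\otimes L$ for $R\otimes I_m$. By Lemma~\ref{lem:oppos} we already know $\rho_{2k+1}\in\Pi_1$, so it remains to show that the e-geodesic from $\rho_{2k}$ to $\rho_{2k+1}$ meets $\Pi_1$ orthogonally at $\rho_{2k+1}$ in the SLD metric, together with uniqueness.

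First I would identify the matrix $K$ in the e-geodesic. By Lemma~\ref{lem:SLD-geodesic}, this geodesic is $\rho(t)=C(t)K^t\rho_{2k}K^t$ with $K=\rho_{2k}^{-1}\#\rho_{2k+1}$. By the Riccati characterization recalled in the matrix-analysis prerequisites, $\rho_{2k}^{-1}\#\rho_{2k+1}$ is the unique positive definite solution $X$ of $X\rho_{2k}X=\rho_{2k+1}$. Since Lemma~\ref{lem:opsink} gives $\rho_{2k+1}=(I_n\otimes L)\rho_{2k}(I_n\otimes L)$ with $I_n\otimes L\succ O$, uniqueness forces $K=I_n\otimes L$, hence $\log K=I_n\otimes\log L$. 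Substituting into the formula $\dot\rho(t)^{(e)}=\tfrac{\dot C(t)}{C(t)}I+2\log K=-2\tr(\rho(t)\log K)I+2\log K$ derived inside the proof of Lemma~\ref{lem:SLD-geodesic}, the e-representation of the geodesic tangent at the endpoint $t=1$ becomes
\[
\dot\rho(1)^{(e)}=2(I_n\otimes\log L)-2\,\tr\big(\rho_{2k+1}(I_n\otimes\log L)\big)\,I .
\]

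It then remains to pair this against every tangent vector of $\Pi_1$ at $\rho_{2k+1}$. Linearizing the constraint $\tr_1\rho=m^{-1}I_m$ shows that the tangent space of $\Pi_1$ is the set of Hermitian $H$ with $\tr_1 H=O$ (which automatically satisfy $\tr H=0$), and these $H$ are exactly the m-representations of tangent vectors of $\Pi_1$. The SLD inner product is $g^{\mathrm{S}}=\tr(\dot\rho(1)^{(e)}H)$; the term proportional to $I$ vanishes since $\tr H=0$, and for the remaining term I would invoke the partial-trace identity $\tr((I_n\otimes M)H)=\tr(M\,\tr_1 H)$, valid for all $M\in\C^{m\times m}$, giving $\tr((I_n\otimes\log L)H)=\tr(\log L\cdot\tr_1 H)=0$. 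This establishes orthogonality, and the identity is precisely the quantum counterpart of the classical fact that each row of $X^{(e)}$ is parallel to $\bfone_n$. The right-normalization step follows identically from $K=R\otimes I_m$ and $\tr((M\otimes I_m)H)=\tr(M\,\tr_2 H)$.

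The step I expect to be most delicate is not any individual calculation but the identification $K=I_n\otimes L$: it is what converts the congruence structure of the Sinkhorn update into the clean additive e-representation $I_n\otimes\log L$ that annihilates against the partial-trace constraint. I would also flag uniqueness as the one place where the classical argument does not transfer verbatim, since $S(\C^{mn})$ under the SLD metric is not dually flat (as noted after Lemma~\ref{lem:SLD-geodesic}) and the generalized Pythagorean theorem is unavailable; I would handle it either by arguing directly that the operator Sinkhorn update is uniquely determined by $\rho_{2k}$, or by showing that the orthogonality condition together with membership in $\Pi_1$ singles out one point.
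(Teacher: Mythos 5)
Your orthogonality argument is essentially the paper's own: you identify $K=\rho_{2k}^{-1}\#\rho_{2k+1}=I_n\otimes L$ (the paper does this by expanding the geometric-mean formula $K=\rho_{2k}^{-1/2}(\rho_{2k}^{1/2}\rho_{2k+1}\rho_{2k}^{1/2})^{1/2}\rho_{2k}^{-1/2}$ rather than citing Riccati uniqueness, but the two are interchangeable, and the paper itself switches to the Riccati route in Theorem~\ref{thm:general-marginals}); you read the endpoint e-representation off the computation inside Lemma~\ref{lem:SLD-geodesic}, where the paper equivalently solves the Lyapunov equation $X^{(e)}\rho_{2k+1}+\rho_{2k+1}X^{(e)}=2\dot\rho(1)$ and gets $X^{(e)}=C'(1)I_{mn}+2C(1)\log K=I_n\otimes(C'(1)I_m+2C(1)\log L)$; and you kill the pairing with the same identity $\tr\bigl((I_n\otimes M)H\bigr)=\tr\bigl(M\,\tr_1 H\bigr)$ that underlies the paper's line $\tr\bigl[(C'(1)I_m+2C(1)\log L)(\tr_1 Y^{(m)})\bigr]=0$. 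Up to this point the proposal is correct and matches the paper step for step.

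The genuine gap is the uniqueness half, which the theorem explicitly asserts and which you leave as two unexecuted options. Your first option --- arguing that the Sinkhorn update is uniquely determined by $\rho_{2k}$ --- does not suffice: the claim is that no \emph{other} point of $\Pi_1$ satisfies the orthogonality condition, not that the algorithm is well-defined. Your second option is the right one, and the paper carries it out with no appeal to any Pythagorean theorem, so your worry about non-dual-flatness blocking uniqueness is unfounded for this particular statement. Concretely: if the e-geodesic from $\rho_{2k}$ to some $\rho\in\Pi_1$ is orthogonal to $\Pi_1$, then $\tr(X^{(e)}H)=0$ for every Hermitian $H$ with $\tr_1 H=O$; since the orthogonal complement of that space under the trace pairing is exactly $\{I_n\otimes M'\mid M'\ \text{Hermitian}\}$, and $X^{(e)}=\alpha I_{mn}+2\log K'$ with $K'=\rho_{2k}^{-1}\#\rho$, this forces $K'=I_n\otimes M$ for some $M\succ O$, hence $\rho=(I_n\otimes M)\rho_{2k}(I_n\otimes M)$. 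A direct computation on the Choi representation then gives $\tr_1\rho=M(\tr_1\rho_{2k})M$, and the constraint $\tr_1\rho=m^{-1}I_m$ together with uniqueness of the positive definite solution of the Riccati equation yields $M=(\tr_1\rho_{2k})^{-1/2}/\sqrt{m}=L$, i.e.\ $\rho=\rho_{2k+1}$. The uniqueness is thus purely linear-algebraic (orthogonal complement plus Riccati), and that is precisely where your proposal stops short of a complete proof.
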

\begin{proof}
    From Lemma~\ref{lem:oppos}, we have $\rho_k \succ O$ for every $k$.
    In the following, we prove the statement for $\rho_{2k+1}$.
    The proof for $\rho_{2k+2}$ is similar.
    
    Let $K=\rho_{2k}^{-1}\# \rho_{2k+1}$.
    Since $\rho_{2k+1} = (I_n \otimes {L} )\rho_{2k} (I_n \otimes {L})$ with ${L} = (\tr_1 \rho_{2k})^{-1/2}/\sqrt{m} \succ O$ from Lemma~\ref{lem:opsink},
    \begin{align*}
        K &= \rho_{2k}^{-1/2} (\rho_{2k}^{1/2} \rho_{2k+1} \rho_{2k}^{1/2})^{1/2} \rho_{2k}^{-1/2} \\
        &= \rho_{2k}^{-1/2} (\rho_{2k}^{1/2} (I_n \otimes L )\rho_{2k} (I_n \otimes {L}) \rho_{2k}^{1/2})^{1/2} \rho_{2k}^{-1/2} \\
        &= \rho_{2k}^{-1/2} \cdot \rho_{2k}^{1/2} (I_n \otimes {L} )\rho_{2k}^{1/2} \cdot \rho_{2k}^{-1/2} \\
        &= I_n \otimes {L}.
    \end{align*}
    By Lemma~\ref{lem:SLD-geodesic}, the e-geodesic from $\rho_{2k}$ to $\rho_{2k+1}$ with respect to the SLD metic is given by 
    \[
        \rho(t) = C(t) K^t \rho_{2k} K^t.
    \]
    Thus, 
    \[
        \left. \frac{{\rm d}}{{\rm d} t} \rho(t) \right|_{t=1} = C'(1) \rho_{2k+1} + C(1) (\log K) \rho_{2k+1} + C(1) \rho_{2k+1} (\log K).
    \]
    Therefore, the e-representation $X^{(e)}$ of the tangent vector $X$ of this e-geodesic at $\rho_{2k+1}$ is the solution of the Lyapunov equation 
    \[
        X^{(e)} \rho_{2k+1} + \rho_{2k+1} X^{(e)} = 2 \left( C'(1) \rho_{2k+1} + C(1)  (\log K) \rho_{2k+1} + C(1) \rho_{2k+1} (\log K) \right),
    \]
    which has a unique solution since $\rho_{2k+1} \succ O$.
    Namely, we must have 
    \[
        X^{(e)} = C'(1) I_{mn} + 2 C(1) \log K = I_n \otimes (C'(1) I_m + 2 C(1) \log {L}).
    \]
    % This shows that $L_X$ is a block matrix such that each block is a multiple of the identity matrix. 
%    In other words, $X=I \otimes L^{-1/2}$ is a solution to the algebraic Riccati equation $X \rho_{2k} X=\rho_{2k+1}$.
%    Then, by the uniqueness of the positive definite solution of an algebraic Riccati equation, we must have $I \otimes L^{-1/2} = \rho_{2k}^{-1}\# \rho_{2k+1}=K$.
%Hence, the e-representation $X^{(e)}$ is a block matrix such that each block is a multiple of the identity matrix. 
    On the other hand, from the definition of $\Pi_1$, the m-representation $Y^{(m)}$ of a tangent vector $Y$ of $\Pi_1$ satisfies $\tr_1 Y^{(m)}=O$.
	Therefore, 
	\[
	g^{{\rm S}} (X,Y) = \tr (X^{(e)} Y^{(m)}) = \tr [(C'(1) I_m + 2 C(1) \log {L}) (\tr_1 Y^{(m)})] = 0.
	\]
    Hence, the e-geodesic from $\rho_{2k}$ to $\rho_{2k+1}$ is orthogonal to $\Pi_1$ with respect to the SLD metric.
%    Now the orthogonality is immediate from the definition \eqref{eq:SLD-metric} of the SLD inner product.

    Conversely, suppose that the e-geodesic from $\rho_{2k}$ to $\rho \in \Pi_1$ is orthogonal to $\Pi_1$ with respect to the SLD metric.
%    Again, let $\gamma$ be the e-geodesic from $\rho_0$ to $\rho$ and $X \in T_{\rho}(\caS_{++})$ be a tanget vector such that $X(\rho)=\dot\gamma(1)$.
%    Then, by the orthogonality, there exists some matrix $A$ such that 
%    \[
%        L_X = I \otimes A.
%    \]
    % Since $L_X$ is the unique solution of the Lyapunov equation
    % \[
    %     L_X\rho + \rho L_X = 2(\log(\rho_0 \# \rho)\rho + \rho\log(\rho_0 \# \rho)),
    % \]
    % we must have $L_X = 2\log(\rho_0 \# \rho)$.
    % Therefore, 
    % \[ 
    %     \rho_0 \# \rho = e^{L_X/2} = e^{I \otimes A/2} = I \otimes e^{A/2} = I \otimes L^{1/2},
    % \]
    % where we define $L = e^A$.
    % Hence, we must have 
    % \[ 
    %     \rho = (I \otimes L^{1/2}) \rho_0 (I \otimes L^{1/2}).
    % \]
    % by the uniqueness of the positive definite solutions of Riccati equations.
    Then, by following the above argument reversely, we must have $\rho_{2k}^{-1} \# \rho = I_n \otimes M$ with some $M$ and thus $\rho = (I_n \otimes M) \rho_{2k} (I_n \otimes M)$.
    Since $\rho_{2k}$ is the Choi representation of $\Phi^{(2k)}$,
    \begin{align*}
        \rho &= (I_n \otimes M) \left( \sum_{i,j} E_{ij}\otimes\Phi^{(2k)}(E_{ij}) \right) (I_n \otimes M) \\
        &= \sum_{i,j} E_{ij} \otimes (M\Phi^{(2k)}(E_{ij})M).
    \end{align*}
    Therefore,
    \begin{align*}
        \tr_1\rho &= \sum_{i,j} \tr(E_{ij}) \cdot M\Phi^{(2k)}(E_{ij})M \\
        &= \sum_{i} M\Phi^{(2k)}(E_{ii})M \tag{since $\tr(E_{ij}) = 1$ if $i=j$ and $0$ otherwise}\\
        &= M\left(\sum_{i}\Phi^{(2k)}(E_{ii}) \right)M  \\
        &= M\Phi^{(2k)}(I_n) M \tag{By the linearity of $\Phi^{(2k)}$} \\
        &= M (\tr_1 \rho_{2k}) M. 
    \end{align*}
    Since $\rho \in \Pi_1$, we must have $\tr_1\rho = m^{-1} I_m$.
    Hence, from the uniqueness of the solution of the Riccati equation, we must have $M = (\tr_1 \rho_{2k})^{-1/2}/\sqrt{m}={L}$ and thus $\rho = \rho_{2k+1}$.
\end{proof}

The SLD metric does not induce a dually flat structure, because the e-connection does not become torsion-free.
Traditionally, the theory of information geometry has been mostly developed in the dually flat setting \cite{AmariNagaoka2007,Amari2016} and the theory for statistical manifolds admitting torsion \cite{Matsuzoe2010,Henmi2019} is still largely unexplored \cite{Fujiwara2015}.
Therefore, although we found that the operator Sinkhorn algorithm coincides with alternating e-projections with respect to the SLD metric, it is still unclear whether it can be viewed as alternating minimization of some divergence.
We will discuss it more in details in Section~\ref{sec:discussion}.
%Although we do not have a divergence nor generalized Pythagorean theorem here, we can directly show the uniqueness of e-projection as follows.

% By the symmetry, we have the following.

% \begin{corollary}
%     Let $\rho_1$ be the Choi-representation of a unital map $\Phi$ (i.e., $\tr_1 \rho_1 = I$) and $\rho_2$ be the right normalization of $\rho_1$.
%     Assume that $\rho_1 \succ O$.
%     Then, the e-geodesic from $\rho_1$ to $\rho_2$ is orthogonal to a m-flat space $\Pi_2 = \{\rho: \tr_2 \rho = I\}$ in the SLD inner product.
%     Furthermore, $\rho_2$ is the unique e-projection of $\rho_1$ on $\Pi_2$.
% \end{corollary}

% We immediately have the following theorem.
% \begin{theorem}
%     Assume that the Choi representation of $\Phi$ is positive definite.
%     Then, operator Sinkhorn for $\Phi$ is the unique alternating e-projection between two m-flat spaces $\Pi_1 = \{\rho: \tr_1 \rho = I \}$ and $\Pi_2 = \{\rho: \tr_2 \rho = I\}$ in the SLD metric.
% \end{theorem}

\section{Operator scaling with general marginals}\label{sec:general-marginals}
In this section, we generalize Theorem~\ref{thm:opscaling} for operator scaling to a more general setting, namely, \emph{operator scaling with general marginals}~\cite{Georgiou2015,Franks2018}.
In this problem, we are given a CP map $\Phi : \C^{n\times n} \to \C^{m \times m}$, and positive semidefinite Hermite matrices $P \in \C^{m \times m}$ and $Q \in \C^{n \times n}$ with unit trace.
The goal is to find nonsingular Hermitian matrices $L \in \C^{m \times m}$ and $R \in \C^{n \times n}$ such that 
\begin{align}
    \Phi_{L, R}(I_n) = P \quad\text{and}\quad \Phi_{L,R}^*(I_m) = Q.
\end{align}
The original operator scaling is the special case that $P = \frac{1}{m} I_m$ and $Q = \frac{1}{n}I_n$.
To distinguish the general marginals setting with the original setting, we call the latter the \emph{doubly stochastic setting} in this section.

Georgiou and Pavon~\cite{Georgiou2015} studied the special case of operator scaling with general marginals that $P = \frac{1}{m} I_m$ and $Q$ is general. 
They proved that a natural generalization of the operator Sinkhorn algorithm converges to a solution (if it exists).
Franks~\cite{Franks2018} first investigated the case of general $P$ and $Q$, but scaling matrices $L$ and $R$ are restricted to upper-triangular matrices for some technical reason:
Such restriction is necessary to define a generalization of capacity for the general marginals setting, which in turn enables him to analyze a generalized operator Sinkhorn algorithm.

Below, we show that our information geometric result can be generalized to operator scaling with general marginals without restriction of scaling matrices $L$ and $R$.

\subsection{Operator Sinkhorn algorithm for general marginals}
The operator Sinkhorn algorithm for general marginals is similar to the doubly stochastic setting. 
Let $\Phi$ be the input CP map and we assume that $\Phi^*(I_m)= Q$. 
We can assume this without loss of generality, because $\Phi^*_{I_n, R}(I_m) = R^\dagger \Phi^*(I_m) R = Q$, where $R = \Phi^*(I_m)^{-1} \# Q$ is the matrix geometric mean.
Starting with $\Phi^{(0)} = \Phi$, we iterate left normalization
\begin{align*}
    \Phi^{(2k+1)} &= \Phi_{L,I_n}^{(2k)} \quad \text{where $L=\Phi^{(2k)}(I_n)^{-1}\# P$,}
\end{align*}
and {right normalization}
\begin{align*}
    \Phi^{(2k+2)} &= \Phi_{I_m,R}^{(2k+1)} \quad \text{where $R=(\Phi^{(2k)})^*(I_m)^{-1}\# Q$.}
\end{align*}
for $k = 0, 1, 2, \dots$.
We note that $\Phi^{(2k)}(I_n) = P$ and $\Phi^{(2k+1)}(I_m)=Q$ for $k = 0, 1, 2, \dots$ by the definition of $L$ and $R$.

\subsection{Main theorem for general marginals}
One can extend Theorem~\ref{thm:opscaling} for the doubly stochastic setting to the general marginals setting.
Let $\rho_k$ be the Choi representation of $\Phi^{(k)}$ and define 
\begin{align*}
	\Pi &= \{ \rho \in \C^{mn \times mn} \mid \rho \succ O, \ {\rm tr} \ \rho =1 \}, \\
	\Pi_1 &= \{ \rho \in \C^{mn \times mn} \mid \rho \succ O, \ {\rm tr}_1 \rho = P \} \subset \Pi, \\
	\Pi_2 &= \{ \rho \in \C^{mn \times mn} \mid \rho \succ O, \ {\rm tr}_2 \rho = Q \} \subset \Pi.
\end{align*}
Then, $\rho_{2k+1} \in \Pi_1$ and $\rho_{2k+2} \in \Pi_2$ for $k = 0, 1, 2, \dots$.
Furthermore, $\Pi_1$ and $\Pi_2$ are m-totally geodesic submanifolds.

\begin{theorem}\label{thm:general-marginals}
    Assume that $\rho_0, P, Q \succ O$.
    Then, each iteration of the operator Sinkhorn algorithm for general marginals coincides with the e-projection onto $\Pi_1$ or $\Pi_2$ with respect to the SLD metric.
    Namely, $\rho_{2k+1}$ (resp. $\rho_{2k+2}$) is the unique point in $\Pi_1$ (resp. $\Pi_2$) such that the e-geodesic from $\rho_{2k}$ to $\rho_{2k+1}$ (resp. from $\rho_{2k+1}$ to $\rho_{2k+2}$) is orthogonal to $\Pi_1$ (resp. $\Pi_2$) with respect to the SLD metric for every $k$.
\end{theorem}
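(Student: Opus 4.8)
The plan is to follow the proof of Theorem~\ref{thm:opscaling} essentially verbatim, isolating precisely where the specific marginals $m^{-1}I_m$ and $n^{-1}I_n$ entered and verifying that the argument survives when they are replaced by general fixed $P, Q \succ O$. I would prove the claim for $\rho_{2k+1}$ (left normalization); the case of $\rho_{2k+2}$ is symmetric, with $\tr_1, \Pi_1, P$ replaced by $\tr_2, \Pi_2, Q$ and $I_n \otimes (\cdot)$ by $(\cdot)\otimes I_m$. A preliminary observation is that $\rho_0, P, Q \succ O$ propagates to $\rho_k \succ O$ for all $k$, since each iteration is a congruence by an invertible $I_n\otimes L$ or $R\otimes I_m$, so all geometric means, square roots, and Lyapunov/Riccati solutions below exist and are unique.

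The key first step is to note that left normalization still writes as a two-sided congruence $\rho_{2k+1} = (I_n \otimes L)\rho_{2k}(I_n\otimes L)$ with $L = (\tr_1 \rho_{2k})^{-1}\# P \succ O$ (using Lemma~\ref{lem:choi2} and $L^\dagger = L$). The only property of $L$ used in the doubly stochastic proof was its positive definiteness, which holds here because the geometric mean of two positive definite matrices is positive definite. Hence the computation of $K = \rho_{2k}^{-1}\#\rho_{2k+1}$ goes through unchanged: using $A\# B = A^{1/2}(A^{-1/2}BA^{-1/2})^{1/2}A^{1/2}$ together with the identity $(\rho_{2k}^{1/2}(I_n\otimes L)\rho_{2k}(I_n\otimes L)\rho_{2k}^{1/2})^{1/2} = \rho_{2k}^{1/2}(I_n\otimes L)\rho_{2k}^{1/2}$, I obtain $K = I_n \otimes L$. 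By Lemma~\ref{lem:SLD-geodesic} the e-geodesic is $\rho(t)=C(t)K^t\rho_{2k}K^t$, and its e-representation at $\rho_{2k+1}$ is $X^{(e)} = C'(1)I_{mn} + 2C(1)\log K = I_n \otimes (C'(1)I_m + 2C(1)\log L)$, which is exactly of the form $I_n \otimes B$ with $B$ Hermitian.

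For the forward (orthogonality) direction, I would observe that the constraint defining $\Pi_1$ is now $\tr_1 \rho = P$ with $P$ a \emph{fixed} matrix, so differentiating still gives $\tr_1 Y^{(m)} = O$ for every tangent vector $Y$ of $\Pi_1$; the particular value of $P$ is irrelevant, and this condition already subsumes $\tr Y^{(m)}=0$. Combining it with the block identity $\tr[(I_n\otimes B)Y^{(m)}] = \tr(B\,\tr_1 Y^{(m)})$ yields $g^{\mathrm{S}}(X,Y)=\tr(X^{(e)}Y^{(m)})=0$, so the e-geodesic is orthogonal to $\Pi_1$. For uniqueness I would run the argument in reverse: orthogonality to all Hermitian $Y$ with $\tr_1 Y^{(m)}=O$ forces $X^{(e)}$ into the orthogonal complement $\{I_n\otimes B\}$ (a dimension count plus the same block identity shows these two spaces are orthogonal complements), whence $\log K$, and therefore $K$ itself, has the form $I_n \otimes M$ with $M \succ O$. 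Writing $\rho = K\rho_{2k}K = (I_n\otimes M)\rho_{2k}(I_n\otimes M)$ and taking $\tr_1$ block-wise gives $\tr_1 \rho = M(\tr_1\rho_{2k})M$; the membership $\rho\in\Pi_1$ then reads $M(\tr_1\rho_{2k})M = P$.

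I do not expect a genuine obstacle, since the doubly stochastic proof never used the specific values $m^{-1}I_m$, $n^{-1}I_n$ beyond their being fixed positive definite matrices. The one place demanding care is the final identification of the scaling factor: one must recognize $M(\tr_1\rho_{2k})M=P$ as the Riccati equation $X A^{-1} X = P$ with $A=(\tr_1\rho_{2k})^{-1}$, so that the uniqueness clause of the matrix geometric mean pins down $M = (\tr_1\rho_{2k})^{-1}\# P$, which matches the definition of $L$ in the general-marginals iteration and yields $\rho = \rho_{2k+1}$. The symmetric computation for right normalization produces $K = R\otimes I_m$, $X^{(e)} = B'\otimes I_m$, the dual identity $\tr[(B'\otimes I_m)Y^{(m)}]=\tr(B'\,\tr_2 Y^{(m)})$, and the Riccati equation $R(\tr_2\rho_{2k+1})R=Q$ solved uniquely by $R=(\tr_2\rho_{2k+1})^{-1}\# Q$, completing the proof.
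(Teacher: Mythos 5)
Your proposal is correct and follows essentially the same route as the paper, which likewise reduces Theorem~\ref{thm:general-marginals} to the proof of Theorem~\ref{thm:opscaling}, with the key new point being that $K=\rho_{2k}^{-1}\#\rho_{2k+1}$ equals $I_n\otimes L$ for $L=(\tr_1\rho_{2k})^{-1}\# P\succ O$ and that the final Riccati equation $M(\tr_1\rho_{2k})M=P$ has the unique positive definite solution $M=(\tr_1\rho_{2k})^{-1}\# P$. The only cosmetic difference is that the paper obtains $K=I_n\otimes L$ directly from the uniqueness of the positive definite Riccati solution, while you recompute it via the explicit geometric-mean formula as in Theorem~\ref{thm:opscaling}; both are equivalent, and your filled-in details (tangent space $\tr_1 Y^{(m)}=O$, the block identity, the reverse argument) match the paper's.
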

\begin{proof}
The proof is mostly same as Theorem~\ref{thm:opscaling}.
Let us prove the theorem for $\rho_{2k+1}$ since the other is similar.
The main difference is proving that $K = \rho_{2k}^{-1}\# \rho_{2k+1}$ satisfies $K = I_n \otimes L$ for $L = (\tr_1 \rho_{2k})^{-1}\# P$.
By the definition of $\rho_{2k+1}$ and Lemma~\ref{lem:choi2}, we have $\rho_{2k+1}=(I_n\otimes L)\rho_{2k}(I_n\otimes L)$.
Further, $L \succ O$ because both $P$ and $\tr_1 \rho_{2k}$ are positive definite and the matrix geometric mean of two positive definite matrices is again positive definite.
By the uniqueness of the positive definite solution of Riccati equations, we must have $K = I_n \otimes L$.
The rest of the proof is the same as Theorem~\ref{thm:opscaling}.
\end{proof}
\section{Other alternating e-projections algorithms}\label{sec:discussion}
We showed that the operator Sinkhorn algorithm is interpreted as alternating e-projections with respect to the symmetric logarithmic derivative metric.
In this section, we provide other alternating e-projections algorithms based on other information geometric structures on the positive definite cone.

\subsection{Alternating e-projections with respect to Bogoliubov--Kubo--Mori metric}\label{subsec:kubo}
In quantum information geometry, the \emph{Bogoliubov--Kubo--Mori metric}~\cite{Petz1993} is defined as
\begin{align}\label{eq:Bmetric}
    g^{\mathrm{B}}(X, Y) = \tr[X(\rho) \log(Y(\rho))],
\end{align}
and it is the unique {monotone Riemannian metric} that induces a dually flat structure on the space of quantum states \cite{AmariNagaoka2007}.
Namely, the e-connection is torsion-free when we adopt this metric.
In this case, the e-representation of a tangent vector $X$ at $\rho$ is defined as $X^{(e)}=X(\log \rho)$, where $\log \rho$ is the matrix logarithm of $\rho$, and the e-geodesic from $\rho_1$ to $\rho_2$ is given by
\[
    \rho(t)=C(t)\exp((1-t)\log \rho_1+t \log \rho_2),
\]
where $\exp$ denotes the matrix exponential and $C(t)$ is the normalization constant for $\tr \rho(t)=1$ \cite{AmariNagaoka2007}.
Also, the canonical divergence between quantum states is given by the \emph{Umegaki quantum relative entropy}~\cite{Umegaki1962}:
\begin{align}\label{eq:umegaki}
    D_{\mathrm{U}}(\rho \mid\mid \sigma) = \tr[\rho\log(\rho)-\rho\log(\sigma)],
\end{align}
which can be viewed as a quantum generalization of the Kullback--Leibler divergence and has been found to play a central role in quantum information theory \cite{Holevo2013,Petz2008}.
Also, the generalized Pythagorean theorem holds as follows.

\begin{lemma}%[Generalized Pythagoream theorem]
For points $\rho,\sigma,\tau \in S(\C^n)$, let $\gamma_1$ be the e-geodesic from $\rho$ to $\sigma$ and $\gamma_2$ be the m-geodesic from $\sigma$ to $\tau$.
If $\gamma_1$ and $\gamma_2$ are orthogonal with respect to the Bogoliubov--Kubo--Mori metric \eqref{eq:Bmetric} at $\sigma$, then $D_{\mathrm{U}}(\tau \mid\mid \rho) = D_{\mathrm{U}}(\tau \mid\mid \sigma) + D_{\mathrm{U}}(\sigma \mid\mid \rho)$.
\end{lemma}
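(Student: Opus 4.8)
The plan is to reduce the claim to a single algebraic identity for the Umegaki relative entropy and then match it against the orthogonality hypothesis, exactly as in the classical dually flat setting. First I would compute the combination $D_{\mathrm{U}}(\tau \mid\mid \rho) - D_{\mathrm{U}}(\tau \mid\mid \sigma) - D_{\mathrm{U}}(\sigma \mid\mid \rho)$ directly from the definition \eqref{eq:umegaki}. Expanding each term, the self-entropy contributions $\tr[\tau \log \tau]$ cancel, and after regrouping one is left with
\[
D_{\mathrm{U}}(\tau \mid\mid \rho) - D_{\mathrm{U}}(\tau \mid\mid \sigma) - D_{\mathrm{U}}(\sigma \mid\mid \rho) = \tr[(\tau - \sigma)(\log \sigma - \log \rho)].
\]
Thus the lemma is equivalent to showing that the right-hand side vanishes precisely when $\gamma_1$ and $\gamma_2$ are orthogonal with respect to the Bogoliubov--Kubo--Mori (BKM) metric at $\sigma$.

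Next I would identify the e- and m-representations of the two geodesic velocities at the common point $\sigma$. Using the BKM e-geodesic $\rho(t) = C(t)\exp((1-t)\log \rho + t \log \sigma)$ together with the BKM e-representation $X^{(e)} = X(\log \rho)$, differentiating $\log \rho(t) = \log C(t) + (1-t)\log\rho + t\log\sigma$ at $t = 1$ gives the e-representation of the velocity of $\gamma_1$ at $\sigma$ as $\log \sigma - \log \rho + c I$, where $c = C'(1)/C(1)$ is the contribution of the normalization constant. For the m-geodesic $\rho(t) = (1-t)\sigma + t\tau$, the m-representation $Y^{(m)} = Y(\rho)$ of the velocity of $\gamma_2$ at $\sigma$ is simply $\tau - \sigma$.

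Finally I would evaluate the BKM metric as the pairing of the e- and m-representations, $g^{\mathrm{B}}(X, Y) = \tr[X^{(e)} Y^{(m)}]$, in the same way that the Fisher metric \eqref{Fisher_em} and the SLD metric pair their two representations. This yields
\[
g^{\mathrm{B}}(\dot\gamma_1, \dot\gamma_2) = \tr[(\log \sigma - \log \rho + c I)(\tau - \sigma)] = \tr[(\log \sigma - \log \rho)(\tau - \sigma)] + c\,\tr(\tau - \sigma).
\]
The key simplification is that the identity-shift term drops out, since $c\,\tr(\tau - \sigma) = c(\tr\tau - \tr\sigma) = 0$ because both $\tau$ and $\sigma$ have unit trace; this is exactly where the trace-one constraint of the state manifold enters. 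Hence the orthogonality condition $g^{\mathrm{B}}(\dot\gamma_1, \dot\gamma_2) = 0$ coincides with $\tr[(\tau - \sigma)(\log \sigma - \log \rho)] = 0$, which is the vanishing of the right-hand side of the displayed identity, and the Pythagorean relation follows. I do not expect a genuine obstacle here: the only care needed is bookkeeping the normalization constant and confirming that its identity contribution vanishes, after which everything reduces to routine trace manipulations and the definitions of the e- and m-representations under the BKM metric.
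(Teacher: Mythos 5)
Your proof is correct. The paper itself states this lemma \emph{without} proof, quoting it as a known consequence of the dually flat structure induced by the Bogoliubov--Kubo--Mori metric (with the Umegaki relative entropy as its canonical divergence, cf.\ \cite{AmariNagaoka2007}), so there is no in-paper argument to compare against; what you give is exactly the standard argument one would supply: the three-point expansion
\[
D_{\mathrm{U}}(\tau \mid\mid \rho) - D_{\mathrm{U}}(\tau \mid\mid \sigma) - D_{\mathrm{U}}(\sigma \mid\mid \rho) = \tr\bigl[(\tau - \sigma)(\log \sigma - \log \rho)\bigr],
\]
the correct e- and m-representations of the geodesic velocities at $\sigma$, and the duality pairing $g^{\mathrm{B}}(X,Y)=\tr[X^{(e)}Y^{(m)}]$ --- note you read the paper's displayed formula \eqref{eq:Bmetric} in the intended sense $\tr[X(\rho)\,Y(\log\rho)]$, which is right, since the literal expression $\log(Y(\rho))$ is not even defined for a traceless tangent direction. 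Two cosmetic remarks: since $\gamma_1(1)=\sigma$ has unit trace, $C(1)=1$ and hence $c=C'(1)$; and your computation actually establishes the \emph{equivalence} of orthogonality at $\sigma$ with the Pythagorean identity, slightly more than the stated one-way implication.
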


Therefore, the e-projection of $\rho$ onto an {m-totally geodesic} connected submanifold $M$ is uniquely characterized as the minimizer of the quantum relative entropy as follows.

\begin{lemma}
Let $M$ be an {m-totally geodesic} connected submanifold of $S(\C^n)$ and $\rho \in S(\C^n)$.
Then, a point $\sigma \in M$ satisfies $D_{\mathrm{U}}(\sigma \mid\mid \rho) = \min_{\tau \in M} D_{\mathrm{U}}(\tau \mid\mid \rho)$ if and only if the e-geodesic from $\rho$ to $\sigma$ is orthogonal to $M$ at $\sigma$ with respect to the Bogoliubov--Kubo--Mori metric \eqref{eq:Bmetric}.
\end{lemma}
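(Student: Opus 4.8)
The plan is to mirror the two-step argument of the classical Lemma~\ref{lem:eproj}, replacing the Pythagorean theorem for $D_{\mathrm{KL}}$ by its Umegaki counterpart (the preceding lemma) and replacing the Fisher-metric first-order condition by a first-variation computation for $D_{\mathrm{U}}$. Throughout, write $X$ for the tangent vector at $\sigma$ of the e-geodesic from $\rho$ to $\sigma$. Since this e-geodesic is $\rho(t)=C(t)\exp((1-t)\log\rho+t\log\sigma)$, differentiating $\log\rho(t)=(1-t)\log\rho+t\log\sigma+\log C(t)\,I$ at $t=1$ shows that its e-representation is $X^{(e)}=\log\sigma-\log\rho+c\,I$, where $c=C'(1)/C(1)$ is a scalar.

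For the direction ``orthogonality $\Rightarrow$ minimality,'' suppose the e-geodesic from $\rho$ to $\sigma$ is orthogonal to $M$ at $\sigma$. Because $M$ is m-totally geodesic, for any $\tau\in M$ the m-geodesic from $\sigma$ to $\tau$ stays inside $M$, so its initial tangent at $\sigma$ lies in $T_\sigma M$ and is therefore orthogonal to the e-geodesic at $\sigma$ with respect to the Bogoliubov--Kubo--Mori metric. The generalized Pythagorean theorem stated just above then yields $D_{\mathrm{U}}(\tau\mid\mid\rho)=D_{\mathrm{U}}(\tau\mid\mid\sigma)+D_{\mathrm{U}}(\sigma\mid\mid\rho)\ge D_{\mathrm{U}}(\sigma\mid\mid\rho)$, with equality exactly when $D_{\mathrm{U}}(\tau\mid\mid\sigma)=0$, i.e.\ $\tau=\sigma$. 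Hence $\sigma$ is the unique minimizer.

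For the converse I would invoke first-order optimality. Fix $Y\in T_\sigma M$ and, using that $M$ is m-totally geodesic, realize it as the initial velocity of an m-geodesic $\tau(s)\subset M$ with $\tau(0)=\sigma$ and m-representation $Y^{(m)}=\dot\tau(0)$. Differentiating $D_{\mathrm{U}}(\tau\mid\mid\rho)=\tr[\tau\log\tau-\tau\log\rho]$ and using $\frac{{\rm d}}{{\rm d}s}\tr[\tau\log\tau]=\tr[(\log\tau+I)\dot\tau]$ gives, at $s=0$,
\[
  \left.\frac{{\rm d}}{{\rm d}s}D_{\mathrm{U}}(\tau(s)\mid\mid\rho)\right|_{s=0}=\tr[(\log\sigma-\log\rho)\,Y^{(m)}]+\tr Y^{(m)}.
\]
Every curve in $S(\C^n)$ preserves the trace, so $\tr Y^{(m)}=0$; this also annihilates the scalar term $c\,I$ in $X^{(e)}$, and we arrive at the key identity
\[
  \left.\frac{{\rm d}}{{\rm d}s}D_{\mathrm{U}}(\tau(s)\mid\mid\rho)\right|_{s=0}=\tr[X^{(e)}Y^{(m)}]=g^{\mathrm{B}}(X,Y).
\]
If $\sigma$ is a minimizer, the left side vanishes for every $Y\in T_\sigma M$, which is precisely orthogonality of the e-geodesic to $M$ at $\sigma$.

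The main obstacle is this first-variation identity. One must confirm that differentiating the matrix trace $\tr[\tau\log\tau]$ genuinely collapses to $\tr[(\log\tau+I)\dot\tau]$ (the trace absorbs the non-commutativity of $\tau$ and $\dot\tau$, so no commutator terms survive), and then observe that the trace-preservation constraint $\tr Y^{(m)}=0$ is exactly what removes the additive scalar ambiguity inherent in the e-representation. This is what makes the first variation of $D_{\mathrm{U}}$ coincide \emph{on the nose} with the Bogoliubov--Kubo--Mori inner product; once it is established, both implications follow as above and the remainder is a direct transcription of the classical proof.
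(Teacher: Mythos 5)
Your proposal is correct and takes essentially the approach the paper intends: the paper states this lemma without proof as the standard e-projection theorem of the dually flat Bogoliubov--Kubo--Mori structure, and your argument---the generalized Pythagorean theorem for the ``orthogonality $\Rightarrow$ unique minimality'' direction, plus the first-variation identity $\left.\frac{{\rm d}}{{\rm d}s}D_{\mathrm{U}}(\tau(s)\mid\mid\rho)\right|_{s=0}=\tr[X^{(e)}Y^{(m)}]=g^{\mathrm{B}}(X,Y)$ with $X^{(e)}=\log\sigma-\log\rho+cI$ for the converse---is exactly the standard dually-flat proof mirroring the classical Lemma~\ref{lem:eproj}. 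Your key computations check out: $\frac{{\rm d}}{{\rm d}s}\tr[\tau\log\tau]=\tr[(\log\tau+I)\dot\tau]$ is the standard trace--derivative identity (the trace kills the Daleckii--Krein correction terms), and $\tr Y^{(m)}=0$ indeed eliminates the scalar ambiguity $cI$, so the variation coincides with the BKM pairing of e- and m-representations.
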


Thus, the alternating e-projections onto $\Pi_1$ and $\Pi_2$ with respect to the Bogoliubov--Kubo--Mori metric \eqref{eq:Bmetric} is given by the alternating Umegaki relative entropy minimization:
	\begin{align*}
		\rho_{2k+1} &= \argmin_{\rho \in \Pi_1} D_{\mathrm{U}}(\rho \mid\mid \rho_{2k}), \\ \rho_{2k+2} &= \argmin_{\rho \in \Pi_2} D_{\mathrm{U}}(\rho \mid\mid \rho_{2k+1}).
	\end{align*}

\subsection{Alternating e-projections with respect to congruence invariant metric}
The space of $n \times n$ Hermitian positive definite matrices $P_n=\{ S \in \C^{n \times n} \mid S \succ O \}$ is regarded as a Riemannian manifold by introducing the \emph{congruence invariant metric}~\cite{Hiai2009}:
\begin{align}\label{eq:Cmetric}
    g^{\mathrm{C}}(X, Y) = \tr[S^{-1} X S^{-1} Y],
\end{align}
which is equal to the Hessian metric of $-\log\det(S)$.
Note that this metric does not belong to the class of monotone metrics for quantum states.
On the submanifold of $n \times n$ real positive definite matrices $\{ \Sigma \in \R^{n \times n} \mid \Sigma \succ O \} \subset P_n$, this metric coincides with the Fisher information metric \cite{AmariNagaoka2007} for the family of mean-zero $n$-dimensional Gaussian distributions ${\rm N}(0,\Sigma)$.
This Riemannian structure (with the Levi-Civita connection) of $P_n$ has been recently utilized for operator scaling \cite{Allen-Zhu2018}.
See Section~\ref{subsec:cap} for its detail.
Here, we instead focus on the dually flat structure of $P_n$ \cite{Ohara1996}.
Namely, we define the m-connection to be the affine connection induced by the natural embedding of $P_n$ into $\R^{n^2}$ and define the e-connection to be its dual with respect to the {congruence invariant metric} \eqref{eq:Cmetric}.
Then, $P_n$ is shown to be dually flat \cite{Ohara1996}.
In this case, the e-representation of a tangent vector $X$ at $S$ is defined as $X^{(e)}=X(S^{-1})$ and the e-geodesic from $S_1$ to $S_2$ is given by
\[
    S(t)=((1-t)S_1^{-1}+tS_2^{-1})^{-1}.
\]
Also, the canonical divergence is given by the \emph{Burg divergence}~\cite{Dhillon2007}:
\begin{align*}
    D_{\mathrm{B}}(S \mid\mid T) = \tr (S T^{-1}) - \log \det (S T^{-1})-n,
\end{align*}
which can be viewed as a quantum generalization of the Itakura--Saito divergence \cite{Fevotte2009}.
The generalized Pythagorean theorem holds as follows.

\begin{lemma}%[Generalized Pythagoream theorem]
For points $S,T,U \in P_n$, let $\gamma_1$ be the e-geodesic from $S$ to $T$ and $\gamma_2$ be the m-geodesic from $T$ to $U$.
If $\gamma_1$ and $\gamma_2$ are orthogonal with respect to the {congruence invariant metric} \eqref{eq:Cmetric} at $T$, then $D_{\mathrm{B}}(U \mid\mid S) = D_{\mathrm{B}}(U \mid\mid T) + D_{\mathrm{B}}(T \mid\mid S)$.
\end{lemma}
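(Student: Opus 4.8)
The plan is to recognize that this is nothing more than the generalized Pythagorean theorem for a dually flat space, the one already stated (following Ohara) to arise on $P_n$ from \eqref{eq:Cmetric}, with convex potential $\psi(S) = -\log\det S$ whose associated Bregman (canonical) divergence is precisely $D_{\mathrm{B}}$. Since $\nabla\psi(S) = -S^{-1}$, one checks directly that $\psi(U) - \psi(S) - \tr[\nabla\psi(S)(U-S)] = \tr(US^{-1}) - \log\det(US^{-1}) - n = D_{\mathrm{B}}(U \mid\mid S)$, confirming both this identification and that the e-affine coordinate is $\theta = -S^{-1}$, consistent with the stated e-geodesic being affine in $S^{-1}$.

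First I would establish the three-point Bregman identity by a routine expansion: for any $S,T,U \in P_n$,
\begin{align*}
D_{\mathrm{B}}(U \mid\mid S) - D_{\mathrm{B}}(U \mid\mid T) - D_{\mathrm{B}}(T \mid\mid S) = \tr\!\left[(\nabla\psi(T) - \nabla\psi(S))(U - T)\right] = \tr\!\left[(S^{-1} - T^{-1})(U - T)\right].
\end{align*}
All the potential terms and the term $\tr[\nabla\psi(S)S]$ cancel, leaving only this cross term; this is the standard Bregman cosine-rule computation and needs no creativity.

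Next I would read off the two tangent vectors at $T$. The m-geodesic $\gamma_2(t) = (1-t)T + tU$ has tangent vector, in m-representation, $Y^{(m)} = U - T$. The e-geodesic $\gamma_1$ from $S$ to $T$ satisfies $\gamma_1(t)^{-1} = (1-t)S^{-1} + tT^{-1}$, so differentiating the inverse gives the e-representation of its tangent vector, $X^{(e)} = \frac{\mathrm{d}}{\mathrm{d}t}\gamma_1(t)^{-1} = T^{-1} - S^{-1}$, constant along the e-geodesic as expected. Using $X^{(e)} = X(S^{-1}) = -S^{-1}X^{(m)}S^{-1}$, the congruence invariant metric \eqref{eq:Cmetric} is the e--m duality pairing $g^{\mathrm{C}}(X,Y) = -\tr[X^{(e)}Y^{(m)}]$, so the orthogonality hypothesis at $T$ reads $\tr[(T^{-1} - S^{-1})(U - T)] = 0$.

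Finally, comparing with the three-point identity, the cross term $\tr[(S^{-1} - T^{-1})(U - T)]$ is exactly the negative of the orthogonality expression and hence vanishes, giving $D_{\mathrm{B}}(U \mid\mid S) = D_{\mathrm{B}}(U \mid\mid T) + D_{\mathrm{B}}(T \mid\mid S)$. I do not expect a genuine obstacle here, since $P_n$ is already dually flat under \eqref{eq:Cmetric}; the only point demanding care is tracking the sign in the identification $g^{\mathrm{C}}(X,Y) = -\tr[X^{(e)}Y^{(m)}]$ produced by differentiating the matrix inverse, so that the vanishing of the orthogonality pairing lines up with the vanishing of the Bregman cross term.
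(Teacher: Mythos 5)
Your proof is correct, and it is exactly the standard argument the paper implicitly relies on: the paper states this lemma without proof, treating it as the known generalized Pythagorean theorem for the dually flat structure on $P_n$ induced by \eqref{eq:Cmetric} (citing Ohara), whose canonical divergence is the Bregman divergence of $\psi(S)=-\log\det S$, i.e.\ $D_{\mathrm{B}}$. Your identification of the potential, the three-point identity $D_{\mathrm{B}}(U \mid\mid S) - D_{\mathrm{B}}(U \mid\mid T) - D_{\mathrm{B}}(T \mid\mid S) = \tr\left[(S^{-1}-T^{-1})(U-T)\right]$, the tangent representations $X^{(e)} = T^{-1}-S^{-1}$ and $Y^{(m)} = U-T$, and the sign bookkeeping $g^{\mathrm{C}}(X,Y) = -\tr[X^{(e)}Y^{(m)}]$ (harmless for the orthogonality condition, since vanishing is sign-independent) are all handled correctly.
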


Therefore, the e-projection of $S$ onto an {m-totally geodesic} connected submanifold $M$ is uniquely characterized as the minimizer of the Burg divergence as follows.

\begin{lemma}
Let $M$ be an {m-totally geodesic} connected submanifold of $P_n$ and $S \in P_n$.
Then, a point $T \in M$ satisfies $D_{\mathrm{B}}(T \mid\mid S) = \min_{U \in M} D_{\mathrm{B}}(U \mid\mid S)$ if and only if the e-geodesic from $S$ to $T$ is orthogonal to $M$ at $T$ with respect to the {congruence invariant metric} \eqref{eq:Cmetric}.
\end{lemma}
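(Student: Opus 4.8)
The plan is to follow the same two–step pattern as the classical e-projection characterization (Lemma~\ref{lem:eproj}): use the generalized Pythagorean theorem for the Burg divergence stated just above as a black box for the sufficiency direction, and a first–order variational computation for the necessity direction. Throughout I use that $M$ being m-totally geodesic means, in the natural (m-affine) coordinate $S$, that $M$ is a connected relatively open subset of an affine subspace intersected with $P_n$; hence for $T \in M$ the tangent space $T_T M$ consists exactly of the m-representations $Y^{(m)}$ arising as initial velocities of m-geodesics (straight segments) that remain in $M$, and the m-geodesic from $T$ to any $U \in M$ lies in $M$. I also record that the Burg divergence is nonnegative with equality iff its arguments coincide: writing $\lambda_1,\dots,\lambda_n>0$ for the eigenvalues of $S T^{-1}$, one has $D_{\mathrm{B}}(S \mid\mid T) = \sum_i (\lambda_i - \log\lambda_i - 1) \ge 0$, with equality iff every $\lambda_i = 1$, i.e. $S = T$.

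For the ``if'' direction, assume the e-geodesic $\gamma_1$ from $S$ to $T$ is orthogonal to $M$ at $T$ with respect to \eqref{eq:Cmetric}. Fix any $U \in M$. Since $M$ is m-totally geodesic, the m-geodesic $\gamma_2$ from $T$ to $U$ lies in $M$, so its initial velocity at $T$ belongs to $T_T M$ and is therefore orthogonal to $\gamma_1$. Applying the generalized Pythagorean theorem for $D_{\mathrm{B}}$ to the triple $(S,T,U)$ gives
\[
    D_{\mathrm{B}}(U \mid\mid S) = D_{\mathrm{B}}(U \mid\mid T) + D_{\mathrm{B}}(T \mid\mid S) \ge D_{\mathrm{B}}(T \mid\mid S),
\]
with equality iff $D_{\mathrm{B}}(U \mid\mid T) = 0$, i.e. iff $U = T$. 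Hence $T$ is the unique minimizer of $D_{\mathrm{B}}(\cdot \mid\mid S)$ over $M$, which simultaneously yields the uniqueness asserted in the lemma.

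For the ``only if'' direction, assume $T$ minimizes $D_{\mathrm{B}}(\cdot \mid\mid S)$ over $M$ and let $Y \in T_T M$ be arbitrary, realized by a curve $U(s) \subset M$ with $U(0) = T$ and $\dot U(0) = Y^{(m)}$. Using $D_{\mathrm{B}}(U \mid\mid S) = \tr(U S^{-1}) - \log\det U + \log\det S - n$ together with $\frac{\mathrm{d}}{\mathrm{d}s}\log\det U(s) = \tr(U(s)^{-1}\dot U(s))$, the stationarity condition reads
\[
    \left.\frac{\mathrm{d}}{\mathrm{d}s} D_{\mathrm{B}}(U(s) \mid\mid S)\right|_{s=0} = \tr\!\big((S^{-1} - T^{-1}) Y^{(m)}\big) = 0 \quad \text{for all } Y \in T_T M.
\]
On the other hand, the e-geodesic from $S$ to $T$ is $S(t) = ((1-t)S^{-1} + t T^{-1})^{-1}$, so the e-representation of its tangent vector $X$ at $T$ is $X^{(e)} = \frac{\mathrm{d}}{\mathrm{d}t} S(t)^{-1}\big|_{t=1} = T^{-1} - S^{-1}$, and since $X^{(e)} = -T^{-1} X^{(m)} T^{-1}$ we have $T^{-1} X^{(m)} T^{-1} = S^{-1} - T^{-1}$. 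Evaluating the metric \eqref{eq:Cmetric} at the base point $T$ then gives $g^{\mathrm{C}}(X, Y) = \tr\!\big(T^{-1} X^{(m)} T^{-1} Y^{(m)}\big) = \tr\!\big((S^{-1} - T^{-1}) Y^{(m)}\big)$. Thus the stationarity condition is precisely the orthogonality of $\gamma_1$ to $M$ at $T$, completing the equivalence.

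The step requiring the most care is the necessity direction: one must correctly identify the Euclidean gradient of $D_{\mathrm{B}}(\cdot \mid\mid S)$ in the m-affine coordinate with the e-representation $T^{-1} - S^{-1}$ of the e-geodesic direction, keeping the sign conventions in the metric \eqref{eq:Cmetric} and in the definition $X^{(e)} = X(S^{-1})$ mutually consistent (the relation $T^{-1}X^{(m)}T^{-1} = -X^{(e)}$ is what makes the variational derivative and the orthogonality condition coincide exactly). The remaining ingredients---the Pythagorean step, the convexity of $M$ in the m-affine coordinate, and the nonnegativity of $D_{\mathrm{B}}$---are routine.
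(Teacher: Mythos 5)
Your proof is correct and follows essentially the route the paper intends: the paper states this lemma without proof as the standard consequence of the generalized Pythagorean theorem for $D_{\mathrm{B}}$ given immediately before it, and your argument supplies exactly that (Pythagoras plus nonnegativity of $D_{\mathrm{B}}$ for sufficiency, the routine first-order stationarity computation matched against $g^{\mathrm{C}}(X,Y)=\tr\bigl((S^{-1}-T^{-1})Y^{(m)}\bigr)$ for necessity), with the sign bookkeeping $X^{(e)}=-T^{-1}X^{(m)}T^{-1}$ handled correctly. One cosmetic remark: the m-geodesic segment from $T$ to $U$ need not lie in $M$ if $M$ is a non-convex open subset of its affine hull, but your argument only needs its initial velocity $U-T$ to lie in $T_TM$, which holds since $M$ is contained in an affine subspace with direction space $T_TM$.
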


Thus, the alternating e-projections onto $\Pi_1$ and $\Pi_2$ with respect to the congruence invariant metric \eqref{eq:Cmetric} is given by the alternating minimizations of the Burg divergence:
	\begin{align*}
		\rho_{2k+1} &= \argmin_{\rho \in \Pi_1} D_{\mathrm{B}}(\rho \mid\mid \rho_{2k}), \\ \rho_{2k+2} &= \argmin_{\rho \in \Pi_2} D_{\mathrm{B}}(\rho \mid\mid \rho_{2k+1}).
	\end{align*}
By using the Lagrange multiplier method, each iteration is solved as follows.

\begin{lemma}
	$\rho_{2k+1} = (\rho_{2k}^{-1}-I_n \otimes A)^{-1}$, where $A$ is uniquely determined from the condition $\tr_1 \rho_{2k+1} = m^{-1} I_m$.
	Similarly, $\rho_{2k+2} = (\rho_{2k+1}^{-1}-B \otimes I_m)^{-1}$, where $B$ is uniquely determined from  the condition $\tr_2 \rho_{2k+2}=n^{-1} I_n$.
\end{lemma}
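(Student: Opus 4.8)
The plan is to solve each constrained minimization via Lagrange multipliers, exploiting the dually flat structure of $P_n$ under the congruence invariant metric. Consider the first iteration: I want to minimize $D_{\mathrm{B}}(\rho \mid\mid \rho_{2k})$ over $\rho \in \Pi_1$, i.e.\ subject to the constraint $\tr_1 \rho = m^{-1} I_m$. Recalling that $D_{\mathrm{B}}(\rho \mid\mid \rho_{2k}) = \tr(\rho \rho_{2k}^{-1}) - \log\det(\rho \rho_{2k}^{-1}) - mn$, I would form the Lagrangian by adjoining the matrix constraint with a Hermitian multiplier. The key observation is that the constraint $\tr_1 \rho = m^{-1} I_m$ couples only the $m \times m$ "first factor" degrees of freedom, so the natural multiplier is a matrix $A \in \C^{m \times m}$ paired against $\tr_1 \rho$ via $\tr[A(\tr_1\rho - m^{-1}I_m)]$, which equals $\tr[(I_n \otimes A)\rho] - \text{const}$ by the defining property of the partial trace $\tr_1$.

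**Stationarity condition.** First I would differentiate. Using $\frac{\rm d}{{\rm d}\rho}\tr(\rho \rho_{2k}^{-1}) = \rho_{2k}^{-1}$ and $\frac{\rm d}{{\rm d}\rho}\log\det\rho = \rho^{-1}$, the stationarity condition for the Lagrangian reads
\begin{align*}
    \rho_{2k}^{-1} - \rho^{-1} - (I_n \otimes A) = O,
\end{align*}
where $A$ absorbs the multiplier. Solving for $\rho$ gives immediately
\begin{align*}
    \rho_{2k+1} = (\rho_{2k}^{-1} - I_n \otimes A)^{-1},
\end{align*}
which is the claimed form. The congruence invariant geometry makes this clean: the e-affine coordinate is $\rho^{-1}$, so the stationarity equation is affine in the e-coordinate and the multiplier enters additively as $I_n \otimes A$, exactly matching the block structure of the $\tr_1$ constraint.

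**Existence and uniqueness of the multiplier.** Next I would verify that $A$ is \emph{uniquely determined} by the constraint $\tr_1 \rho_{2k+1} = m^{-1} I_m$. This is where I expect the main obstacle. The map $A \mapsto \tr_1\bigl((\rho_{2k}^{-1} - I_n \otimes A)^{-1}\bigr)$ must be shown to be a bijection from a suitable domain of Hermitian matrices onto the set of admissible marginals, with $A$ restricted so that $\rho_{2k}^{-1} - I_n \otimes A \succ O$ (to keep $\rho_{2k+1}$ positive definite). The natural route is to recognize that $D_{\mathrm{B}}(\rho \mid\mid \rho_{2k})$ is \emph{strictly convex} in the m-coordinate $\rho$ (equivalently, its Hessian is the congruence invariant metric, which is positive definite) and that $\Pi_1$ is a closed affine slice in the m-coordinate; strict convexity on a convex constraint set yields a unique minimizer, and the generalized Pythagorean theorem together with Lemma on e-projection guarantees that this minimizer is the e-projection. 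Uniqueness of $A$ then follows because the parametrization $A \mapsto \rho_{2k+1}$ is injective (as $I_n \otimes A$ is recovered from $\rho_{2k}^{-1} - \rho_{2k+1}^{-1}$, and the $\otimes I$ block structure pins down $A$).

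**The second iteration.** The computation for $\rho_{2k+2}$ is entirely analogous, with the roles of the two tensor factors exchanged: the constraint $\tr_2 \rho = n^{-1} I_n$ couples the "second factor," so the multiplier takes the form $B \otimes I_m$ with $B \in \C^{n \times n}$, and the same stationarity argument yields $\rho_{2k+2} = (\rho_{2k+1}^{-1} - B \otimes I_m)^{-1}$ with $B$ fixed by $\tr_2 \rho_{2k+2} = n^{-1} I_n$. I would simply remark that the argument is symmetric under swapping $\tr_1 \leftrightarrow \tr_2$ and $I_n \otimes (\cdot) \leftrightarrow (\cdot) \otimes I_m$, so no separate verification is needed.
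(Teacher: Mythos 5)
Your proposal is correct and follows essentially the same route as the paper, which derives this lemma purely ``by the Lagrange multiplier method'': your stationarity condition $\rho_{2k}^{-1}-\rho^{-1}-(I_n\otimes A)=O$ is exactly the computation the paper has in mind, since $\rho^{-1}$ is the e-affine coordinate under the congruence invariant metric and $\tr[A(\tr_1\rho-m^{-1}I_m)]=\tr[(I_n\otimes A)\rho]-\mathrm{const}$. Your additional argument for uniqueness of $A$ (strict convexity of $\rho\mapsto\tr(\rho\rho_{2k}^{-1})-\log\det\rho$ on the convex slice $\Pi_1$, plus injectivity of $A\mapsto I_n\otimes A$) supplies detail the paper leaves implicit and is sound, modulo the routine observation that a minimizer exists because the constraint set has compact closure and the divergence blows up at singular $\rho$.
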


%This algorithm does not coincide with the operator Sinkhorn algorithm.

\subsection{Numerical comparison}\label{subsec:comp}
Thus far, we have three alternating e-projections algorithms corresponding to three Riemannian metrics: SLD metric (operator Sinkhorn), Bogoliubov--Kubo--Mori metric and {congruence invariant metric}.
Here, we compare the outputs of these algorithms numerically with $m=n=2$.

We generated a $4 \times 4$ density matrix $\rho_0$ by $\rho_0=P^{\top}P/\tr(P^{\top}P)$, where $P$ is a $4 \times 4$ matrix of standard Gaussian random variables.
Then, we applied three algorithms starting from $\rho_0$.
In the minimization of the Umegaki quantum relative entropy and Burg divergence, we used the MATLAB package CVXQUAD~\cite{Fawzi2018}.
For the input
\[
    \rho_0 = \begin{bmatrix}
   0.2703 + 0.0000i &  0.0136 + 0.1050i  & 0.0567 - 0.0746i &  0.1015 - 0.0678i \\
   0.0136 - 0.1050i &  0.3054 + 0.0000i  & 0.1197 - 0.0485i & -0.0880 - 0.0650i \\
   0.0567 + 0.0746i &  0.1197 + 0.0485i  &   0.2162 + 0.0000i &  0.0938 - 0.0463i \\
   0.1015 + 0.0678i & -0.0880 + 0.0650i  & 0.0938 + 0.0463i &  0.2081 + 0.0000i
    \end{bmatrix},
\]
the outputs (after 200 iterations) of the alternating e-projections algorithms with respect to the SLD metric, Bogoliubov--Kubo--Mori metric and {congruence invariant metric} are
\[
   \rho_* =  \begin{bmatrix}
   0.2386 + 0.0000i & -0.0545 + 0.0790i  & 0.0803 - 0.0070i &  0.1130 - 0.0568i \\
  -0.0545 - 0.0790i &  0.2614 + 0.0000i  & 0.1484 - 0.0409i & -0.0803 + 0.0070i \\
   0.0803 + 0.0070i &  0.1484 + 0.0409i  & 0.2614 - 0.0000i &  0.0545 - 0.0790i \\
   0.1130 + 0.0568i & -0.0803 - 0.0070i  & 0.0545 + 0.0790i &  0.2386 - 0.0000i
    \end{bmatrix},
\]
\[
   \rho_* =  \begin{bmatrix}
   0.2363 + 0.0000i & -0.0601 + 0.0694i  & 0.0700 - 0.0039i &  0.1215 - 0.0405i \\
  -0.0601 - 0.0694i &  0.2637 + 0.0000i  & 0.1535 - 0.0279i & -0.0700 + 0.0039i \\
   0.0700 + 0.0039i &  0.1535 + 0.0279i  & 0.2637 + 0.0000i &  0.0601 - 0.0694i \\
   0.1215 + 0.0405i & -0.0700 - 0.0039i  & 0.0601 + 0.0694i &  0.2363 + 0.0000i
    \end{bmatrix},
\]
\[
   \rho_* =  \begin{bmatrix}
   0.2154 + 0.0000i & -0.0861 + 0.0094i  & 0.0126 - 0.0150i &  0.1484 + 0.0278i \\
  -0.0861 - 0.0094i &  0.2846 + 0.0000i  & 0.2196 + 0.0537i & -0.0126 + 0.0150i \\
   0.0126 + 0.0150i &  0.2196 - 0.0537i  & 0.2853 + 0.0000i &  0.0918 - 0.0096i \\
   0.1484 - 0.0278i & -0.0126 - 0.0150i &  0.0918 + 0.0096i &  0.2147 + 0.0000i
    \end{bmatrix},
\]
respectively.
It is clear that the three procedures converge to distinct points.
It is an interesting future problem to investigate applications of these two alternating e-projection algorithms.

\section{Discussion}\label{sec:others}
In this section, we briefly review yet another Riemannian approach to operator scaling in previous study which does not fit into quantum information geometry.
We also discuss a divergence characterization of the operator Sinkhorn algorithm.
For the sake of simplicity, we assume $m = n$ in the following.

\subsection{Another Riemannian approach for operator scaling}\label{subsec:cap}
In \cite{Allen-Zhu2018}, the authors devised an efficient algorithm for operator scaling base on a Riemannian structure of the positive definite cone induced by the congruence invariant metric \eqref{eq:Cmetric} with the Levi-Civita connection.
They focused on the \emph{capacity}~\cite{Gurvits2004} of a completely positive map $\Phi$ defined as
\begin{align}\label{eq:capacity}
    \capa(\Phi) = \inf_{X \succ O} \frac{\det \Phi(X)}{\det X}.
\end{align}
It is an extension of the capacity of a nonnegative matrix $A$ in matrix scaling \cite{Linial1998}:
\begin{align}\label{eq:capacity_mat}
    \capa(A) = \inf_{x > 0} \frac{\prod_{i=1}^n (A x)_i}{\prod_{i=1}^n x_i}.
\end{align}
It is known that if one finds $X \succ O$ achieving the (approximate) infimum of \eqref{eq:capacity}, then one can recover the (approximate) solution of operator scaling~\cite{Garg2019}.
Unfortunately, the problem \eqref{eq:capacity} is nonconvex in the Euclidean geometry.
The main idea of \cite{Allen-Zhu2018} is to consider an equivalent problem
\begin{align}\label{eq:log-capacity}
    \inf_{X \succ O} \log\det \Phi(X)- \log\det X,
\end{align}
and show that this problem is \emph{geodesically convex}.
Specifically, they regarded the positive definite cone $P_n$ as a Riemannian manifold by introducing the congruence invariant metric \eqref{eq:Cmetric} and the corresponding Levi-Civita connection.
%Specifically, they regarded the positive definite cone $\{ X \succ O \}$ as a Riemannian manifold by taking the Hessian of $\Psi(X) = -\log\det(X)$ as the Riemannian metric tensor and introducing the corresponding Levi-Civita connection.
It is known \cite{HiaiPetz2014} that the geodesic from $X_1$ to $X_2$ on this Riemannian manifold is given by
\begin{align}\label{eq:geodesic}
    X(t) = X_1^{1/2} (X_1^{-1/2} X_2 X_1^{-1/2})^t X_1^{1/2},
\end{align}
which is different from the e-geodesic \eqref{eq:SLD-geodesic} with respect to the SLD metric.
Then, in the problem \eqref{eq:log-capacity}, the objective function is convex along with geodesic \eqref{eq:geodesic} and this property leads to an efficient algorithm \cite{Allen-Zhu2018}.
It may be interesting to investigate the relation of this approach with our quantum information geometric results and we leave it for future work.
%Information geometry of the positive definite cone \cite{Ohara1996} might be also relevant.

\subsection{Divergence characterization of operator Sinkhorn algorithm}
The Sinkhorn algorithm for matrix scaling is equivalent to alternating minimization of the Kullback--Leibler divergence  (see Corollary~\ref{cor:KL-Sinkhorn}). 
Thus, it is natural to ask if some quantum analogue of the Kullback--Leibler divergence yields the same characterization of the operator Sinkhorn algorithm.
This problem was already mentioned by Gurvits~\cite[Remark 4.8]{Gurvits2004} and even said to be ``a major open question" of operator scaling in the survey by Idel~\cite{Idel2016}.
Here, we discuss this problem from the viewpoint of quantum information geometry.

Although quantum generalization of the Kullback--Leibler divergence is not unique, the most important one may be the {Umegaki quantum relative entropy} \eqref{eq:umegaki}.
However, alternating minimization of the {Umegaki quantum relative entropy}, which is equivalent to alternating e-projections with respect to the {Bogoliubov--Kubo--Mori metric} \eqref{eq:Bmetric} discussed in Section~\ref{subsec:kubo}, does not coincide with the operator Sinkhorn algorithm as shown in Section~\ref{subsec:comp}.
As another possibility, we can think of the Belavkin--Staszewski relative entropy \cite{belavkin1982c}:
\begin{align}\label{eq:BS}
D_{\mathrm{BS}}(\rho \mid\mid \sigma)=-\tr[\rho \log(\rho^{-1/2} \sigma \rho^{-1/2})],
\end{align}
which is also viewed as a quantum generalization of the Kullback--Leibler divergence.
Since it seems difficult to explicitly solve alternating minimization of the Belavkin--Staszewski relative entropy, here we instead focus on its derivative at the output $\rho_*$ of the operator Sinkhorn algorithm.
Specifically, let
\[
\Delta(h,A) = \frac{D_{\mathrm{BS}}(\rho_*+hA \mid\mid \rho_0)-D_{\mathrm{BS}}(\rho_*-hA \mid\mid \rho_0)}{2 h}
\]
be the central difference quotient of the Belavkin--Staszewski relative entropy at $\rho_*$.
Figure~\ref{fig:diff_BS} plots $\Delta(h,A)$ with respect to $h$ in the same setting with Section~\ref{subsec:comp}, where the direction of the perturbation was set to
\[
   A = \begin{bmatrix}
   1 &  0  & 0 &  0 \\
   0 &  -1  & 0 & 0 \\
   0 &  0  &  -1 & 0 \\
   0 & 0 & 0 & 1
    \end{bmatrix},
\]
so that $\rho_* \pm hA$ remains in $\Pi_1 \cap \Pi_2$.
It indicates that $\Delta(h,A)$ does not converge to zero as $h \to 0$, which implies that the point $\rho_*$ is not a stationary point of the Belavkin--Staszewski relative entropy.
Therefore, the operator Sinkhorn algorithm does not minimize the Belavkin--Staszewski relative entropy.
For comparison, Figure~\ref{fig:diff_KL} gives the same plot for the case where $\rho_0$ is diagonal, which corresponds to matrix scaling and thus the Belavkin--Staszewski relative entropy reduces to the Kullback--Leibler divergence.
Clearly, $\Delta(h,A)$ converges to zero in this case.

\begin{figure}
\centering
\begin{subfigure}[b]{.45\linewidth}
\centering
\begin{tikzpicture}
\begin{axis}[
ymax=1.1, ymin=-0.1,
xlabel={log10 $h$},
ylabel={$\Delta(h,A)$},
grid=major,
width=\linewidth
]
\addplot[very thick, color=black,
filter discard warning=false, unbounded coords=discard] table [x index=0, y index=1, col sep=comma] {experiments/diff_BS.csv};
\end{axis}
\end{tikzpicture} 
	\caption{non-diagonal $\rho_0$}
	\label{fig:diff_BS}
\end{subfigure}
\begin{subfigure}[b]{.45\linewidth}
\centering
\begin{tikzpicture}
\begin{axis}[
xlabel={log10 $h$},
ylabel={$\Delta(h,A)$},
grid=major,
width=\linewidth
]
\addplot[very thick, color=black,
filter discard warning=false, unbounded coords=discard
] table {
  -12.0412    0.0000
  -11.7402    0.0001
  -11.4391         0
  -11.1381   -0.0000
  -10.8371   -0.0000
  -10.5360    0.0000
  -10.2350   -0.0000
   -9.9340    0.0000
   -9.6330    0.0000
   -9.3319   -0.0000
   -9.0309   -0.0000
   -8.7299   -0.0000
   -8.4288    0.0000
   -8.1278   -0.0000
   -7.8268   -0.0000
   -7.5257    0.0000
   -7.2247    0.0000
   -6.9237    0.0000
   -6.6227    0.0000
   -6.3216    0.0000
   -6.0206   -0.0000
   -5.7196   -0.0000
   -5.4185    0.0000
   -5.1175    0.0000
   -4.8165    0.0000
   -4.5154    0.0000
   -4.2144    0.0000
   -3.9134    0.0000
   -3.6124    0.0000
   -3.3113    0.0000
   -3.0103    0.0000
   -2.7093    0.0000
   -2.4082    0.0000
   -2.1072    0.0001
   -1.8062    0.0005
   -1.5051    0.0019
};
\end{axis}
\end{tikzpicture} 
	\caption{diagonal $\rho_0$}
	\label{fig:diff_KL}
\end{subfigure}
\caption{Central difference quotient of Belavkin--Staszewski relative entropy}
\end{figure}
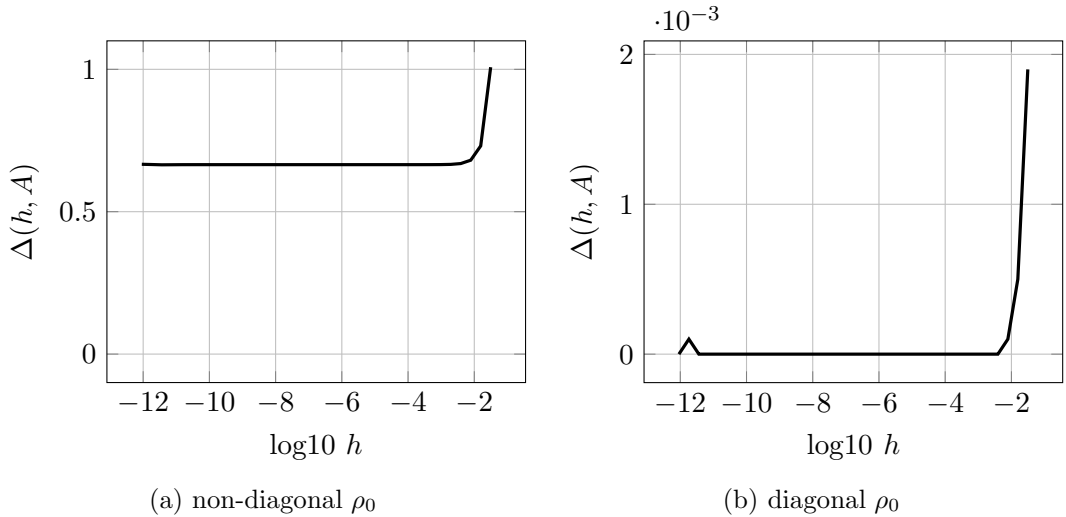

We also tried the sandwiched R\'enyi relative entropy of order $1/2$: 
\begin{align}\label{eq:Renyi}
D_{\mathrm{R}}(\rho \mid\mid \sigma)=-4 \log [\tr (\sigma^{1/2} \rho \sigma^{1/2})^{1/2}],
\end{align}
which is known to be locally equivalent to the SLD metric \cite{hayashi2002two,katariya2020geometric}, the measured relative entropy \cite{Petz2008,berta2017variational}:
\begin{align}\label{eq:measured}
D^{\mathbb{M}}(\rho \mid\mid \sigma) = \sup_{(\chi,M)} D_{\mathrm{KL}}(P_{\rho,M} \mid\mid P_{\sigma,M}),
\end{align}
where the supremum is taken over all finite sets $\chi$ and positive operator valued measures (POVMs) $M$ on $\chi$ and $P_{\rho,M}$ is a probability measure on $\chi$ defined by $P_{\rho,M}(x) = \tr[M(x)\rho]$, and another quantum generalization of the Kullback--Leibler divergence considered by Nagaoka \cite{Nagaoka1994}:
\begin{align}\label{eq:riccati}
D_{\mathrm{N}}(\rho \mid\mid \sigma)= 2 \tr [\rho \log (\rho \# \sigma^{-1})].
\end{align}
Figures~\ref{fig:diff_Renyi} and \ref{fig:diff_measured} show the results.
Again, these figures indicate that the operator Sinkhorn algorithm does not minimize these divergences.

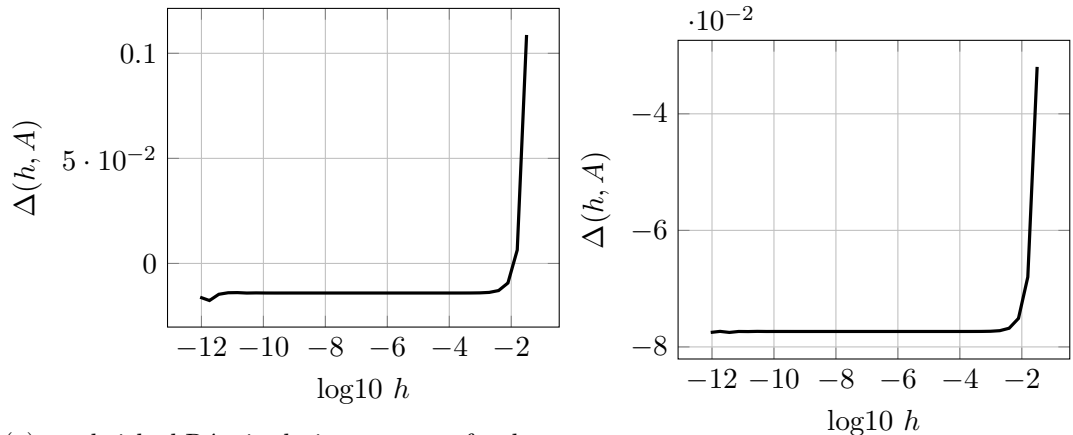
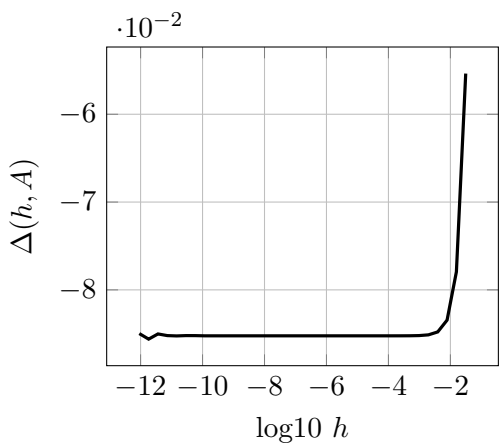
\begin{figure}
\centering
\begin{subfigure}[b]{.45\linewidth}
\centering
\begin{tikzpicture}
\begin{axis}[
xlabel={log10 $h$},
ylabel={$\Delta(h,A)$},
grid=major,
width=.95\linewidth
]
\addplot[very thick, color=black,
filter discard warning=false, unbounded coords=discard] table [x index=0, y index=1, col sep=comma] {experiments/diff_Renyi.csv};
\end{axis}
\end{tikzpicture} 
	\caption{sandwiched R\'enyi relative entropy of order 1/2}
	\label{fig:diff_Renyi}
\end{subfigure}
\begin{subfigure}[b]{.45\linewidth}
\centering
\begin{tikzpicture}
\begin{axis}[
xlabel={log10 $h$},
ylabel={$\Delta(h,A)$},
grid=major,
width=.95\linewidth
]
\addplot[very thick, color=black,
filter discard warning=false, unbounded coords=discard] table [x index=0, y index=1, col sep=comma] {experiments/diff_measured.csv};
\end{axis}
\end{tikzpicture} 
	\caption{measured relative entropy}
	\label{fig:diff_measured}
\end{subfigure}
%\hfill
    %
    \vspace{1em}
    \par\bigskip % force a bit of vertical whitespace
\begin{subfigure}[b]{.45\linewidth}
\centering
\begin{tikzpicture}
\begin{axis}[
xlabel={log10 $h$},
ylabel={$\Delta(h,A)$},
grid=major,
width=.95\linewidth
]
\addplot[very thick, color=black,
filter discard warning=false, unbounded coords=discard] table [x index=0, y index=1, col sep=comma] {experiments/diff_riccati.csv};
\end{axis}
\end{tikzpicture} 
	\caption{Nagaoka divergence}
	\label{fig:diff_riccati}
\end{subfigure}
\caption{Central difference quotient of various divergences}
\end{figure}

\subsection{Divergence characterization of capacity}
In matrix scaling, it is known that the capacity \eqref{eq:capacity_mat} is characterized as the minimum of the Kullback--Leibler divergence \cite{Linial1998}:
\[
    -\log\capa(A) = \inf_{B: \text{doubly stochastic}} D_{\mathrm{KL}}(B \mid\mid A) = D_{\mathrm{KL}}(A_* \mid\mid A),
\]
where $A_*$ is the output of the Sinkhorn algorithm from the input $A$.
Hence, one may still hope that a similar characterization holds between the capacity \eqref{eq:capacity} and the Umegaki quantum relative entropy, or other divergences cosidered above, in operator scaling. 
However, we will numerically show that this is not the case in the following.

In this experiment, we compare the capacity with the various divergences for random initial Choi representations $\rho_0$.
We compute the capacity by the operator Sinkhorn algorithm. 
It is shown in~\cite{Gurvits2004} that $\capa(\Phi)=1$ for a doubly stochastic map $\Phi$ and the capacity of $\Phi^{(2k)}$ is given as
\begin{align*}
    \capa(\Phi^{(2k)}) = \capa(\Phi^{(0)})\cdot \prod_{i=1}^k \det(L^{(2i)})^{1/n}\det(R^{(2i+1)})^{1/n},
\end{align*}
where $L^{(2i)}$ and $R^{(2i+1)}$ are scaling matrices used in the operator Sinkhorn's iterations for $\Phi^{(2i)}$ and $\Phi^{(2i+1)}$, respectively.
Hence, once the operator Sinkhorn algorithm converges to a (approximately) doubly stochastic $\Phi^{(2k)}$, we can compute $\capa(\Phi^{(0)})$.
We run the operator Sinkhorn algorithm until the following stopping criterion is satisfied:
\begin{align}\label{eq:stop-criteria}
    \norm*{\tr_1 \rho - \frac{1}{m}I_m}_{\mathrm{F}}^2 + \norm*{\tr_2 \rho - \frac{1}{n}I_n}_{\mathrm{F}}^2 < 10^{-8}.
\end{align}

Figure~\ref{fig:logcap} plots the Umegaki quantum relative entropy $D_{\mathrm{U}}(\rho_* \mid\mid \rho_0)$ and the negative logarithm of capacity $-\log\capa(\rho_0)$ over 30 random initial $\rho_0$. 
It is clearly shown that they consistently disagree. %\footnote{Both $D(\rho_* \mid\mid \rho_0) < -\log\capa(\rho_0)$ and $D(\rho_* \mid\mid \rho_0) > -\log\capa(\rho_0)$ occurred.}.
Figures~\ref{fig:logcap_BS}-\ref{fig:logcap_riccati} give the same plots for the Belavkin--Staszewski relative entropy \eqref{eq:BS}, sandwiched R\'enyi relative entropy of order 1/2 \eqref{eq:Renyi}, measured relative entropy \eqref{eq:measured} and Nagaoka divergence \eqref{eq:riccati}, respectively.
The Nagaoka divergence gave the smallest numerical difference (Figure~\ref{fig:logcap_riccati}), but there are still non-negligible error of order $10^{-2}$.
Hence, these divergences do not satisfy the desired identity.

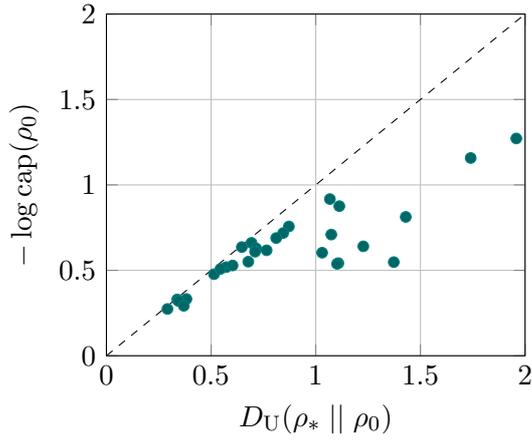
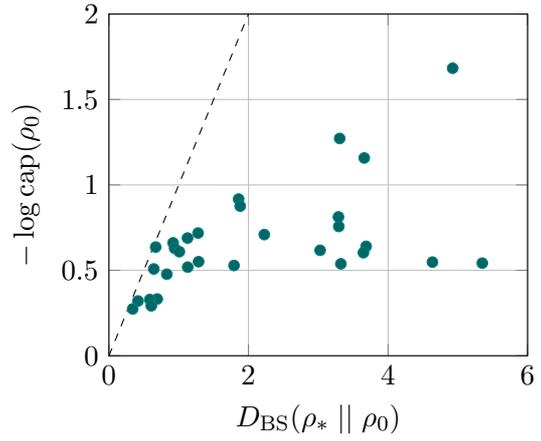
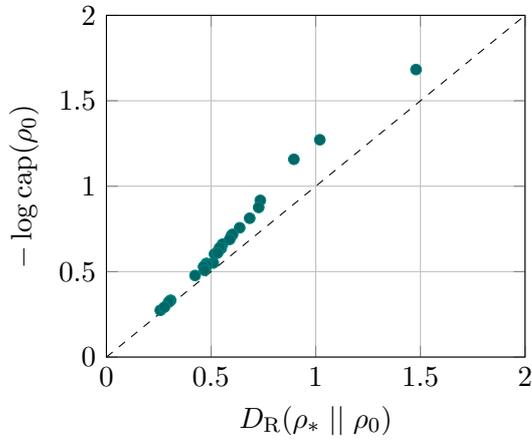
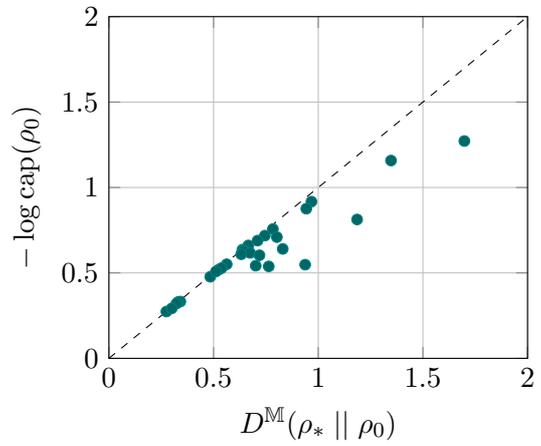
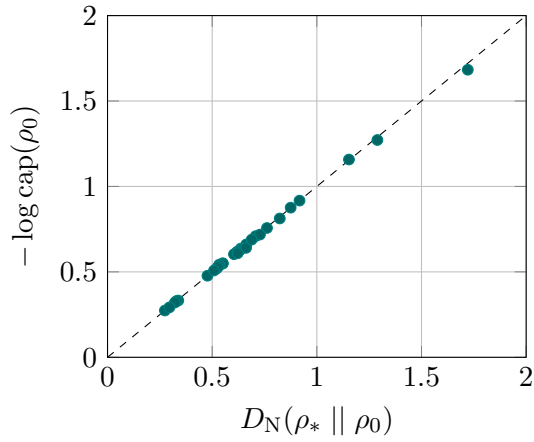
\begin{figure}
    \centering
    \begin{subfigure}[b]{.45\linewidth}
    \centering
    \begin{tikzpicture}
        \begin{axis}[ 
        enlargelimits=false,
        xlabel={$D_{\mathrm{U}}(\rho_* \mid\mid \rho_0)$}, xmin=0, xmax=2,
        ylabel={$-\log\capa(\rho_0)$}, ymin=0, ymax=2,
        grid=major,
        width=\linewidth]
            \addplot+[only marks] table [x index=0, y index=1, col sep=comma] {experiments/logcap.csv};
            \addplot[dashed, domain=0:2] {x};
        \end{axis}
    \end{tikzpicture}
    \subcaption{Umegaki quantum relative entropy}
    \label{fig:logcap}
    \end{subfigure}
    \begin{subfigure}[b]{.45\linewidth}
    \centering
    \begin{tikzpicture}
        \begin{axis}[ 
        enlargelimits=false,
        xlabel={$D_{\mathrm{BS}}(\rho_* \mid\mid \rho_0)$}, xmin=0, xmax=6,
        ylabel={$-\log\capa(\rho_0)$}, ymin=0, ymax=2,
        grid=major,
        width=\linewidth]
            \addplot+[only marks] table [x index=0, y index=1, col sep=comma] {experiments/logcap_BS.csv};
            \addplot[dashed, domain=0:2] {x};
        \end{axis}
    \end{tikzpicture}
    \subcaption{Belavkin--Staszewski relative entropy}
    \label{fig:logcap_BS}
    \end{subfigure}
    \vspace{1em}
    \par\bigskip % force a bit of vertical whitespace
    \begin{subfigure}[b]{.45\linewidth}
    \centering
    \begin{tikzpicture}
        \begin{axis}[ 
        enlargelimits=false,
        xlabel={$D_{\mathrm{R}}(\rho_* \mid\mid \rho_0)$}, xmin=0, xmax=2,
        ylabel={$-\log\capa(\rho_0)$}, ymin=0, ymax=2,
        grid=major,
        width=\linewidth]
            \addplot+[only marks] table [x index=0, y index=1, col sep=comma] {experiments/logcap_Renyi.csv};
            \addplot[dashed, domain=0:2] {x};
        \end{axis}
    \end{tikzpicture}
    \subcaption{sandwiched R\'enyi relative entropy of order 1/2}
    \label{fig:logcap_Renyi}
    \end{subfigure}
    \begin{subfigure}[b]{.45\linewidth}
    \centering
    \begin{tikzpicture}
        \begin{axis}[ 
        enlargelimits=false,
        xlabel={$D^{\mathbb{M}}(\rho_* \mid\mid \rho_0)$}, xmin=0, xmax=2, 
        ylabel={$-\log\capa(\rho_0)$}, ymin=0, ymax=2,
        grid=major,
        width=\linewidth]
            \addplot+[only marks] table [x index=0, y index=1, col sep=comma] {experiments/logcap_measured.csv};
            \addplot[dashed, domain=0:2] {x};
        \end{axis}
    \end{tikzpicture}
    \subcaption{measured relative entropy}
    \label{fig:logcap_measured}
    \end{subfigure}
    \vspace{1em}
    \par\bigskip % force a bit of vertical whitespace
    \begin{subfigure}[b]{.45\linewidth}
    \centering
    \begin{tikzpicture}
        \begin{axis}[ 
        enlargelimits=false,
        xlabel={$D_{\mathrm{N}}(\rho_* \mid\mid \rho_0)$}, xmin=0, xmax=2,
        ylabel={$-\log\capa(\rho_0)$}, ymin=0, ymax=2,
        grid=major,
        width=\linewidth]
            \addplot+[only marks] table [x index=0, y index=1, col sep=comma] {experiments/logcap_riccati.csv};
            \addplot[dashed, domain=0:2] {x};
        \end{axis}
    \end{tikzpicture}
    \subcaption{Nagaoka divergence}
    \label{fig:logcap_riccati}
    \end{subfigure}
    \caption{Comparison of various divergences and negative logarithm of capacity}
\end{figure}

\section{Conclusion}\label{sec:conclusion}
In this study, we investigated the operator Sinkhorn algorithm \cite{Gurvits2004} from the viewpoint of quantum information geometry \cite{AmariNagaoka2007}.
Our main finding is that the operator Sinkhorn algorithm coincides with alternating e-projections with respect to the symmetric logarithmic derivative (SLD) metric, which is a Riemannian metric on the quantum state space and plays an important role in quantum estimation theory \cite{Helstrom1967}. 
This result is viewed as a generalization of the result by Csisz{\'a}r~\cite{Csiszar1975} to operator scaling. 
%We believe that such a geometric insight will lead to deeper understanding and effective algorithms for operator scaling.

Whereas the matrix Sinkhorn algorithm is viewed as alternating minimization of the Kullback--Leibler divergence, such divergence characterization is still unclear for the operator Sinkhorn algorithm.
The main obstacle is that the e-connection induced by the SLD metric is not torsion-free and thus existing tools for dually flat spaces such as the generalized Pythagorean theorem are not directly applicable.
It is an interesting future work to develop theories of statistical manifolds admitting torsion \cite{Matsuzoe2010,Henmi2019} and obtain divergence characterization of the operator Sinkhorn algorithm.
%Note that the quantum analogue of KL divergence is not unique.
%We believe that the quantum information geometric insight obtained in this study provides a clue to solve this open problem.

\section*{Acknowledgment}
We thank Mark Wilde for helpful comments.
This work was supported by JSPS KAKENHI Grant Numbers 16H06533, 19K20212 and 19K20220.

\clearpage

\bibliographystyle{plain}
\bibliography{main.bib}

\end{document}